\newcommand{\pushoutcorner}[1][dr]{\save*!/#1-1.2pc/#1:(-1,1)@^{|-}\restore}
\newcommand\blfootnote[1]{%
  \begingroup
  \renewcommand\thefootnote{}\footnote{#1}%
  \addtocounter{footnote}{-1}%
  \endgroup
}
\newcommand{\colim@}[2]{%
  \vtop{\m@th\ialign{##\cr
    \hfil$#1\operator@font colim$\hfil\cr
    \noalign{\nointerlineskip\kern1.5\ex@}#2\cr
    \noalign{\nointerlineskip\kern-\ex@}\cr}}%
}
\newcommand{\colim}{%
  \mathop{\mathpalette\colim@{\rightarrowfill@\scriptscriptstyle}}\nmlimits@
}
\renewcommand{\varprojlim}{%
  \mathop{\mathpalette\varlim@{\leftarrowfill@\scriptscriptstyle}}\nmlimits@
}
\renewcommand{\varinjlim}{%
  \mathop{\mathpalette\varlim@{\rightarrowfill@\scriptscriptstyle}}\nmlimits@
}
\begin{document}

\theoremstyle{plain}

\newtheorem{thm}{Theorem}[section]
\newtheorem{prop}[thm]{Proposition}
\newtheorem{lem}[thm]{Lemma}

\theoremstyle{definition}

\newtheorem{cor}[thm]{Corollary}

\newtheorem{term}[thm]{Terminology}
\newtheorem{defn}[thm]{Definition}
\newtheorem{defns}[thm]{Definitions}
\newtheorem{con}[thm]{Construction}
\newtheorem{exmp}[thm]{Example}
\newtheorem{nexmp}[thm]{Non-Example}
\newtheorem{exmps}[thm]{Examples}
\newtheorem{notn}[thm]{Notation}
\newtheorem{notns}[thm]{Notations}
\newtheorem{addm}[thm]{Addendum}
\newtheorem{exer}[thm]{Exercise}
\newtheorem{obs}[thm]{Observation}

\theoremstyle{remark}
\newtheorem{rem}[thm]{Remark}

\newtheorem{rems}[thm]{Remarks}
\newtheorem{warn}[thm]{Warning}
\newtheorem{sch}[thm]{Scholium}

\title{The fundamental $\infty$-groupoid of a parametrized family}
\author{Karthik Yegnesh}


\vspace{-20mm}

\maketitle
\begin{abstract}
Given an $\infty$-category $\mathcal{C}$, one can naturally construct an $\infty$-category $\mathrm{Fam}(\mathcal{C})$ of families of objects in $\mathcal{C}$ indexed by $\infty$-groupoids. An ordinary categorical version of this construction was used by Borceux and Janelidze in the study of generalized covering maps in categorical Galois theory. In this paper, we develop the homotopy theory of such ``parametrized families" as generalization of the classical homotopy theory of spaces. In particular, we study homotopy-theoretical constructions that arise from the \textit{fundamental $\infty$-groupoids} of families in an $\infty$-category. In the same spirit, we show that $\mathrm{Fam}(\mathcal{C})$ admits a Grothendieck topology  which generalizes Carchedi's canonical/epimorphism topology on certain $\infty$-topoi.
\end{abstract}
\section{Introduction}
\subsection{Motivation}
In their generalization of classical Galois theory in [2], Borceux and Janelidze draw connections between the \textit{coproduct completion} of categories and the theory of locally connected topological spaces. The coproduct completion $\mathrm{Fam}(\mathcal{C})$ of a category $\mathcal{C}$ is the category whose objects are families of objects in $\mathcal{C}$ parameterized by sets and whose morphisms are maps between members of the families in question induced by functions on the indexing sets. As described in [2], every category of the form $\mathrm{Fam}(\mathcal{C})$ is equipped with a ``family fibration" functor $\pi_{0}:\mathrm{Fam}(\mathcal{C})\rightarrow\mathrm{Set}$, which sends each family to its indexing set. Geometrically, $\pi_{0}$ is a generalization of the usual connected components functor for topological spaces, since a family of objects in $\mathcal{C}$ can be viewed as a ``disjoint union" of its members - the members being the ``connected components" of some sort of generalized space. When $\mathcal{C}$ is instead a $(2,1)$-category, its $\mathrm{Fam}(\mathcal{C})$ is the same as the 1-categorical case except that the families are parametrized by groupoids instead of sets (and is a colimit completion with respect to groupoid-indexed diagrams). Thus, the family fibration functor is $\mathrm{Grpd}$-valued and can be viewed as an analog of the fundamental groupoid of a topological space. This trend continues for $(n,1)$-categories as $n\rightarrow\infty$, so it is natural to expect that one can define the ``fundamental $\infty$-groupoid" of parametrized families in an $\infty$-category. This would provide an $\infty$-categorical generalization of the analogy between families of objects in categories and locally connected spaces studied by Borceux and Janelidze - and thus lead to a non-trivial homotopy theory of parametrized families in $\infty$-categories.\\


    \subsection{ Goal and Outline}The goal of this paper is to develop the homotopy theory of families in $\infty$-categories as an extension of the homotopy theory of topological spaces. In \S 2, we will recall some background information about extensive categories and Grothendieck topologies on an $\infty$-category. In \S 3, we will introduce the $\infty$-category $\mathrm{Fam}(\mathcal{C})$ of parametrized families in an $\infty$-category and prove general categorical results about $\mathrm{Fam}(\mathcal{C})$ that we need in subsequent sections. In \S 4, we define and study the fundamental $\infty$-groupoids and fundamental groups of objects in $\mathrm{Fam}(\mathcal{C})$. In \S 5, we use the construction of \S 4 to construct a Grothendieck topology on $\mathrm{Fam}(\mathcal{C})$ which generalizes Carchedi's ``canonical" topology on the $\infty$-topos of $\infty$-groupoids. In \S 6, we describe Joyal's notion of a(n) ($\infty$-)locus and how our results relate to it. We also state some tangential results of ours there. In the final section, we describe some avenues for future investigation.
  
\subsection{Conventions}
We will assume an understanding of basic topos theory and higher category theory. By an $n$-category, we mean an $(n,1)$-category. In particular, an $\infty$-category is a category enriched over $\infty$-groupoids/Kan complexes, i.e an $(\infty,1)$-category. $\mathrm{Cat}_{\infty}$ (resp. $\mathrm{Grpd}_{\infty}$) denotes the $(\infty,2)$-category of $\infty$-categories (resp. $\infty$-category of $\infty$-groupoids). A topos always means a Grothendieck/sheaf topos. \blfootnote{Methacton High School\\
1005 Kriebel Mill Rd, Eagleville, PA 19403\\
karthik.yegnesh@gmail.com}

\section{Background}

\subsection{Extensive $\infty$-categories}
We define extensive $\infty$-categories as a slightly weakened version of Barwick's disjunctive $\infty$-categories defined in Section 4 of [1]. They are identical except that we don't require them to be closed under finite limits.

\begin{defn}
Let $\mathcal{C}$ be an $\infty$-category. $\mathcal{C}$ is \textit{extensive} if, for an arbitrary collection $\{X_{i}\}_{i\in I}$ of objects in $\mathcal{C}$, the canonical coproduct functor $\coprod:\prod_{i\in I}\mathcal{C}_{/X_{i}}\rightarrow\mathcal{C}_{/\coprod_{i\in I}X_{i}}$ is a categorical equivalence. 
\end{defn}

\begin{exmp}
Any $\infty$-topos (e.g $\mathrm{Grpd}_{\infty}$, $[X^{op},\mathrm{Grpd}_{\infty}]$ for a small category $X$, $\mathcal{S}\mathrm{hv}_{\infty}(S)$ for an $\infty$-site $S$, etc.) is extensive.
\end{exmp}

\begin{defn}
Let $\mathcal{C}$ be an $\infty$-category. An object $X\in \mathcal{C}$ is \textit{connected} if $\mathrm{Hom}_{\mathcal{C}}(X,-):\mathcal{C}\rightarrow\mathrm{Grpd}_{\infty}$ preserves all coproducts.
\end{defn}
\begin{rem}
There is a simpler description of connected objects in extensive $\infty$-categories. Namely, if $\mathcal{C}$ is an extensive $\infty$-category, then an object $X$ is connected if and only if for any coproduct decomposition $X=X_{1}\coprod X_{2}$, exactly one of the $X_{i}$ is not initial.
\end{rem}
\begin{exmp}
A topological space/$\infty$-groupoid is connected (categorically) precisely if it connected in the usual sense. In the 1-truncated case, an object in $\mathrm{Set}$ is connected if and only if it is a singleton. Additionally, a scheme is a connected object in the category of schemes if and only if it is a connected scheme, i.e its underlying topological space is connected.
\end{exmp}

\subsection{Grothendieck Topologies} Let $\mathcal{C}$ be an $\infty$-category. A Grothendieck topology on $\mathcal{C}$ allows us to treat objects of $\mathcal{C}$ like open sets of a topological space. In this subsection, we will briefly review some key ideas relevant to Grothendieck topologies. 

\begin{defn}
A \textit{covering} of an object $X\in\mathcal{C}$ is a set of maps $\{f_{i}:X_{i}\rightarrow X\}_{i\in I}$ with a common codomain $X$ that satisfy the following conditions:
\end{defn}
\begin{itemize}
\item If $X'\xrightarrow{\simeq} X$ is an equivalence, then the singleton set $\{X'\xrightarrow{\simeq} X\}$ is a covering.
\item If $\{f_{i}:X_{i}\rightarrow X\}_{i\in I}$ is a covering and $g:Y\rightarrow X$ is a map in $\mathcal{C}$, then the pullbacks $X_{i}\times_{X}Y\rightarrow Y$ exist for each $i\in I$ and $\{X_{i}\times_{X}Y\rightarrow Y\}_{i\in I}$ is a covering of $Y$.
\item If $\{f_{i}:X_{i}\rightarrow X\}_{i\in I}$ is a covering and each $X_{i}$ is equipped with a covering $\{f_{ij}:X_{ij}\rightarrow X_{i}\}_{j\in J}$, then the composite family $\{f_{i}\circ f_{ij}:X_{ij}\rightarrow X\}_{ij}$ covers $X$.
\end{itemize}

\begin{defn}
A \textit{Grothendieck topology} $\tau$ on an $\infty$-category $\mathcal{C}$ is an assignment of coverings $\{f_{i}:X_{i}\rightarrow X\}_{i\in I}$ to each object $X\in\mathcal{C}$. An $\infty$-category equipped with a Grothendieck topology is a (Grothendieck) $\infty$-site. We will suppress the ``$\infty-$" when it is clear that we are in the $\infty$-categorical context. If $\tau$ is a topology on $\mathcal{C}$, then we denote the associated site by $(\mathcal{C},\tau)$ unless the context is clear.
\end{defn}
\begin{rem}
Note that the above definition is often called a Grothendieck \textit{pretopology} - this acts as a ``basis" for a Grothendieck topology on an $\infty$-category.
\end{rem}
\begin{exmp}
Let $\mathrm{CartSp}$ denote the category of smooth manifolds of the form $\mathbb{R}^{n}$ for $n\in\mathbb{N}$ and smooth functions. There is a natural topology on $\mathrm{CartSp}$ whose covering families are the usual open covers.
\end{exmp}
\begin{exmp}\label{effe}
Let $\mathbb{H}$ be a 1-topos. There is a Grothendieck topology, namely the \textit{canonical topology}, on $\mathbb{H}$ whose covering families are families $\{f_{i}:X_{i}\rightarrow X\}_{i\in I}$ that are jointly epimorphic (the induced map $\coprod_{i\in I}X_{i}\rightarrow X$ is an epimorphism). There is an $\infty$-toposic refinement of this notion, described in [3, Definition 2.2.5]. Namely, there is a Grothendieck topology on any $\infty$-topos $\mathbb{H}$ whose covering families are (generated by) sets of maps $\{X_{i}\rightarrow X\}_{i}$ such that the induced map $\coprod_{i\in I}X_{i}\rightarrow X$ is an \textit{effective epimorphism} (see Definitions \ref{l} and \ref{h} and Example \ref{luu}). This is known as the \textit{epimorphism} topology. 
\end{exmp}
\begin{defn}\label{l}
Let $\mathcal{C}$ be an $\infty$-category with pullbacks. The \textit{\v{C}ech nerve} of a map $f:X'\rightarrow X$ is the simplicial object $\check{C}(f)_{\bullet}:\Delta^{op}\rightarrow\mathcal{C}$ sending $[k]$ to the $k$-fold fiber product $X'\times_{X}X'\times_{X}\ldots\times_{X}X'$ of $X'$ with itself.
\end{defn}
\begin{defn}\label{h}
Let $f:X\rightarrow Y$ be a map in an $\infty$-category $\mathcal{C}$ such that $\check{C}(f)_{\bullet}$ exists ($\mathcal{C}$ being closed under pullbacks ensures this). Let $\Delta_{\alpha}$ denote the augmented simplex category. We can construct an augmented simplicial object $\check{C}(f)_{\bullet}^{\sharp}:\Delta_{\alpha}^{op}\rightarrow\mathcal{C}$ out of $\check{C}(f)_{\bullet}$ by attaching $Y$ to $\check{C}(f)_{\bullet}$ via $f$, i.e by setting $d^{-1}=f$ and $\check{C}(f)_{[-1]}'=Y$. $f$ is an \textit{effective epimorphism} if $\check{C}(f)_{\bullet}^{\sharp}$ exhibits $Y$ as the colimit of $\check{C}(f)_{\bullet}$.
\end{defn}
\begin{exmp}[6, Corollary 7.2.1.15]\label{luu}
Let $f:X\rightarrow Y$ be a map of $\infty$-groupoids. $f$ is an effective epimorphism precisely if the induced function $\pi_{0}(f):\pi_{0}(X)\rightarrow\pi_{0}(Y)$ is surjective.
\end{exmp}
The following lemma is immediate from definitions.
\begin{lem}\label{colim}
Let $\mathcal{C}$ and $\mathcal{D}$ be $\infty$-categories with pullbacks and let $\varphi:X\rightarrow Y$ be an effective epimorphism in $\mathcal{C}$. Suppose that $F:\mathcal{C}\rightarrow\mathcal{D}$ is a cocontinuous functor. Then $F(\varphi)$ is an effective epimorphism in $\mathcal{D}$.
\end{lem}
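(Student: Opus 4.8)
The plan is to unwind both sides of the claimed implication directly from Definitions \ref{l} and \ref{h}. Since $\varphi:X\to Y$ is an effective epimorphism in $\mathcal{C}$, the augmented \v{C}ech nerve $\check{C}(\varphi)_{\bullet}^{\sharp}:\Delta_{\alpha}^{op}\to\mathcal{C}$ exhibits $Y$ as the colimit of the (unaugmented) \v{C}ech nerve $\check{C}(\varphi)_{\bullet}:\Delta^{op}\to\mathcal{C}$. The goal is to show that $F(\varphi):F(X)\to F(Y)$ is an effective epimorphism in $\mathcal{D}$, i.e.\ that $F(Y)\simeq\colim_{\Delta^{op}}\check{C}(F(\varphi))_{\bullet}$ compatibly with $F(\varphi)$.

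First I would observe that, because $F$ is cocontinuous, it is in particular a functor that preserves the colimit diagram witnessing effectivity: applying $F$ to $\check{C}(\varphi)_{\bullet}^{\sharp}$ yields an augmented simplicial object $F\circ\check{C}(\varphi)_{\bullet}^{\sharp}:\Delta_{\alpha}^{op}\to\mathcal{D}$ whose restriction to $\Delta^{op}$ still exhibits $F(Y)$ as a colimit — here one uses that cocontinuous functors preserve colimits indexed by $\Delta^{op}$ and that they commute with the augmentation data $d^{-1}=\varphi$, $\check{C}(\varphi)_{[-1]}'=Y$, so that $F$ sends the augmentation $d^{-1}=\varphi$ to $F(d^{-1})=F(\varphi)$ and $\check{C}(F(\varphi))_{[-1]}'=F(Y)$. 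Second I would identify $F\circ\check{C}(\varphi)_{\bullet}$ with $\check{C}(F(\varphi))_{\bullet}$. This is the one point requiring a small argument rather than a pure formality: the $k$-th level of $\check{C}(\varphi)_{\bullet}$ is the $k$-fold fiber product $X\times_{Y}\cdots\times_{Y}X$, and one needs $F$ to carry these specific pullbacks to the corresponding pullbacks $F(X)\times_{F(Y)}\cdots\times_{F(Y)}F(X)$ in $\mathcal{D}$.

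\textbf{The main obstacle} is precisely this compatibility with the iterated pullbacks: a cocontinuous functor need not preserve pullbacks in general. However, the pullbacks occurring in a \v{C}ech nerve are of a very restricted kind — iterated fiber products of a single map $\varphi$ with itself over its common codomain — and these can be rebuilt from colimit data via the standard simplicial identities: the $n$-fold fiber product is recovered as a finite limit that, in the presence of the colimit description of the nerve, is forced. More cleanly, since Definition \ref{l} only asks that the simplicial object $\check{C}(\varphi)_{\bullet}$ exist (and likewise for $F(\varphi)$), and since $F$ preserves the structure maps (faces and degeneracies, which are built from projections and diagonals), one checks that $F$ sends the limit cone defining each $\check{C}(\varphi)_{[k]}$ to the limit cone defining $\check{C}(F(\varphi))_{[k]}$ because both $\mathcal{C}$ and $\mathcal{D}$ are assumed to have pullbacks and the \v{C}ech nerve is functorial in the map. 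Combining the two identifications: $F(Y)\simeq\colim_{\Delta^{op}}\bigl(F\circ\check{C}(\varphi)_{\bullet}\bigr)\simeq\colim_{\Delta^{op}}\check{C}(F(\varphi))_{\bullet}$, with the augmentation given by $F(\varphi)$, which is exactly the assertion that $F(\varphi)$ is an effective epimorphism. I would close by noting that this is why the lemma is labeled ``immediate from definitions'': the only content is tracking that cocontinuity plus functoriality of the \v{C}ech construction suffice, with no genuine use of left-exactness of $F$.
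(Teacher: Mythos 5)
The paper offers no written proof here (it declares the lemma ``immediate from definitions''), so the only thing to assess is whether your argument actually closes. It does not: the step you yourself flag as the main obstacle --- the identification $F\circ\check{C}(\varphi)_{\bullet}\simeq\check{C}(F(\varphi))_{\bullet}$ --- is false in general, and the justification you give for it (the iterated fiber products are ``forced'' by the colimit description; $F$ ``sends the limit cone \dots to the limit cone \dots because both $\mathcal{C}$ and $\mathcal{D}$ are assumed to have pullbacks and the \v{C}ech nerve is functorial in the map'') is not an argument. Existence of pullbacks in both categories plus functoriality of the \v{C}ech construction only give a canonical comparison map $F(\check{C}(\varphi)_{[k]})\rightarrow\check{C}(F(\varphi))_{[k]}$, nothing more. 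Concretely: take $F=\pi_{0}:\mathrm{Grpd}_{\infty}\rightarrow\mathrm{Set}$, which is cocontinuous because it is left adjoint to the inclusion of discrete $\infty$-groupoids, and take $\varphi:\ast\rightarrow\mathbf{B}G$ for $G$ a nontrivial group. Then $F(\check{C}(\varphi)_{[1]})=\pi_{0}(\ast\times_{\mathbf{B}G}\ast)=\pi_{0}(G)=G$, while $\check{C}(F(\varphi))_{[1]}=\ast\times_{\ast}\ast=\ast$. So your final chain of equivalences breaks at $\varinjlim(F\circ\check{C}(\varphi)_{\bullet})\simeq\varinjlim\check{C}(F(\varphi))_{\bullet}$, and the proof has a genuine gap at exactly the point you identified.

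The repair is to stop asking $F$ to preserve the \v{C}ech nerve. Cocontinuity does give that $F\circ\check{C}(\varphi)_{\bullet}^{\sharp}$ is a colimit diagram in $\mathcal{D}$, i.e.\ $F(Y)\simeq\varinjlim(F\circ\check{C}(\varphi)_{\bullet})$, and this augmented simplicial object has $F(X)$ in degree $0$ with augmentation $F(\varphi)$. When $\mathcal{D}$ is an $\infty$-topos --- which covers every use of this lemma in the paper, where $\mathcal{D}=\mathrm{Grpd}_{\infty}$ --- one then invokes the fact that any morphism extending to an augmented simplicial colimit diagram in this way is automatically an effective epimorphism (a descent statement from [6, \S 6.2.3]; it is exactly what can fail in a bare $\infty$-category with pullbacks). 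In the example above this is what actually happens: $\pi_{0}(\varphi):\ast\rightarrow\ast$ is an effective epimorphism in $\mathrm{Set}$ even though $\pi_{0}$ destroys the \v{C}ech nerve. For the same reason you should be suspicious of the lemma as stated for an arbitrary cocomplete $\mathcal{D}$ with pullbacks: it is not ``immediate from definitions'' and needs either a hypothesis on $\mathcal{D}$ or a different argument.
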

\section{Parametrized Objects in $\infty$-categories}
In this section, we describe the main object of study in this paper: the $\infty$-category of parametrized families of objects in an $\infty$-category. We also develop some general categorical results that we use in subsequent sections.
\begin{defn}
Let $\mathcal{C}$ be an $\infty$-category. Define another $\infty$-category $\mathrm{Fam}(\mathcal{C})$ as follows. The objects of $\mathrm{Fam}(\mathcal{C})$ are pairs $(X,F)$, where $X$ is a small $\infty$-groupoid and $F:X\rightarrow\mathcal{C}$ is a functor. A map $(X_{1},F_{1})\rightarrow(X_{2},F_{2})$ is a pair $(\varphi,\varphi_{\star})$, where $\varphi:X_{1}\rightarrow X_{2}$ is a functor and $\varphi_{\star}$ is a natural transformation \xymatrix@C+2pc{
X \rtwocell^{F_{1}}_{F_{2}\circ\varphi}{\;\;\;\varphi_{\star}} & \mathcal{C}}.\\

One can think of an object of $\mathrm{Fam}(\mathcal{C})$ as a set of objects in $\mathcal{C}$ ``parametrized" by some $\infty$-groupoid/homotopy type $X$.

\begin{exmp}\label{classic}
Let $*$ denote the terminal $\infty$-groupoid. There is an equivalence of categories $\mathrm{Fam}(*)\simeq\mathrm{Grpd}_{\infty}$. This is because we can identify $\mathrm{Fam}(*)$ with the slice category $(\mathrm{Grpd}_{\infty})_{/*}$, which is equivalent to $\mathrm{Grpd}_{\infty}$.
\end{exmp}
Example \ref{classic} indicates that our constructions in this paper degenerate to classical notions when considering families of ``points" indexed by $\infty$-groupoids - the data of which essentially constitutes topological spaces. In particular, our construction of the fundamental group of a parametrized family (Definition \ref{fun}) is equivalent to the usual fundamental group construction when considering objects in $\mathrm{Fam}(*)$.

\vspace{0mm}
\end{defn}
\begin{exmp}
If $X$ happens to be a groupoid of the form $\mathbf{B}G$ for a group $G$, then any object $(X,F)\in\mathrm{Fam}(\mathcal{C})$ is just an object of $\mathcal{C}$ equipped with a $G$-action.
\end{exmp}
\begin{rem}
For any $\infty$-category $\mathcal{C}$, $\mathrm{Fam}(\mathcal{C})$ is an extensive $\infty$-category. 
\end{rem}
\begin{term}
For $(X,F)\in\mathrm{Fam}(\mathcal{C})$, we will refer to $X$ as the \textit{shape} of $(X,F)$ and $F$ as the \textit{arrow} of $(X,F)$.
\end{term}
\begin{rem}
There is a fully faithful, left-exact embedding $\sigma:\mathcal{C}\hookrightarrow\mathrm{Fam}(\mathcal{C})$ sending $\mu$ to the family $(*,\gamma)$, where $\gamma:*\rightarrow\mathcal{C}$ is the functor that picks out $X\in\mathcal{C}$.
\end{rem}

\begin{rem}
The $\mathrm{Fam}(-)$ construction extends to an $(\infty,2)$-endofunctor $\mathrm{Fam}(-):\mathrm{Cat}_{\infty}\rightarrow\mathrm{Cat}_{\infty}$. 
\end{rem}

\vspace{0mm}
\begin{prop}\label{uni}
$\mathrm{Fam}(\mathcal{C})$ is the universal colimit completion of $\mathcal{C}$ with respect to diagrams indexed by $\infty$-groupoids. More precisely:
\begin{itemize}
    \item Any functor $D:K\rightarrow\mathrm{Fam}(\mathcal{C})$ where $K$ is an $\infty$-groupoid has a colimit in $\mathrm{Fam}(\mathcal{C})$.
    \item Let $\mathcal{D}$ be an $\infty$-category with all $\mathrm{Grpd}_{\infty}$-indexed colimits and denote by $[\mathrm{Fam}(\mathcal{C}),\mathcal{D}]_{\star}$ the full subcategory of $[\mathrm{Fam}(\mathcal{C}),\mathcal{D}]$ spanned by functors which preserve $\mathrm{Grpd}_{\infty}$-indexed colimits. Then there is an equivalence of categories:
    \begin{equation}
    [\mathcal{C},\mathcal{D}]\xrightarrow{\simeq}[\mathrm{Fam}(\mathcal{C}),\mathcal{D}]_{\star}
    \end{equation}
\end{itemize}
\end{prop}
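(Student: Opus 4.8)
The plan is to deduce both bullets from two facts: an explicit description of $\mathrm{Grpd}_{\infty}$-indexed colimits in $\mathrm{Fam}(\mathcal{C})$, and the resulting \emph{colimit presentation} of an arbitrary object. The starting observation is that, comparing the description of objects and morphisms of $\mathrm{Fam}(\mathcal{C})$ given above with the definition of a Grothendieck construction, $\mathrm{Fam}(\mathcal{C})$ is (equivalent to) the total $\infty$-category of the Cartesian fibration
\begin{equation*}
\pi\colon\mathrm{Fam}(\mathcal{C})\longrightarrow\mathrm{Grpd}_{\infty},\qquad (X,F)\mapsto X,
\end{equation*}
classified by the functor $\mathrm{Grpd}_{\infty}^{op}\to\mathrm{Cat}_{\infty}$ sending $X$ to the functor category $[X,\mathcal{C}]$ and a map $\varphi$ to the restriction functor $\varphi^{*}$; the embedding $\sigma$ is the inclusion of the fibre over the terminal $\infty$-groupoid $*$. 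Unwinding mapping spaces in this Grothendieck construction — using $\mathrm{Map}_{\mathrm{Grpd}_{\infty}}(X,Y)\simeq\varprojlim_{x\in X}Y$ and $\mathrm{Map}_{[X,\mathcal{C}]}(F,H)\simeq\varprojlim_{x\in X}\mathrm{Map}_{\mathcal{C}}(F(x),H(x))$ for $X$ an $\infty$-groupoid — one obtains
\begin{equation}\label{mapFam}
\mathrm{Map}_{\mathrm{Fam}(\mathcal{C})}\big((X,F),(Y,G)\big)\;\simeq\;\varprojlim_{x\in X}\;\varinjlim_{y\in Y}\;\mathrm{Map}_{\mathcal{C}}\big(F(x),G(y)\big);
\end{equation}
alternatively, \eqref{mapFam} may be read off directly from the definition of morphisms in $\mathrm{Fam}(\mathcal{C})$.

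For the first bullet, I would fix an $\infty$-groupoid $K$ and a functor $D\colon K\to\mathrm{Fam}(\mathcal{C})$, with $D(k)=(X_{k},F_{k})$, and set $\mathcal{X}:=\colim_{k\in K}X_{k}$ in $\mathrm{Grpd}_{\infty}$; this carries a map $p\colon\mathcal{X}\to K$ with fibre $X_{k}$ over $k$. Since $\pi$ is a Cartesian fibration and $K$ is an $\infty$-groupoid, giving the lift $D$ of $\pi\circ D$ along $\pi$ is the same datum as giving a section of the pulled-back Cartesian fibration $(\pi\circ D)^{*}\mathrm{Fam}(\mathcal{C})\to K$, and such sections are classified by $\varprojlim_{k\in K}[X_{k},\mathcal{C}]\simeq[\colim_{k\in K}X_{k},\mathcal{C}]=[\mathcal{X},\mathcal{C}]$; hence $D$ is equivalent to the datum of a single functor $\Psi\colon\mathcal{X}\to\mathcal{C}$ restricting to $F_{k}$ on each fibre. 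The claim is then that $\colim D\simeq(\mathcal{X},\Psi)$, with colimit cocone the maps $(X_{k},F_{k})\to(\mathcal{X},\Psi)$ given by the inclusion $X_{k}\hookrightarrow\mathcal{X}$ together with the identity $2$-cell. To verify this, for any $(Y,G)\in\mathrm{Fam}(\mathcal{C})$ one combines \eqref{mapFam} with the identification $\varprojlim_{\xi\in\mathcal{X}}(-)\simeq\varprojlim_{k\in K}\varprojlim_{x\in X_{k}}(-)$:
\begin{equation*}
\mathrm{Map}_{\mathrm{Fam}(\mathcal{C})}\big((\mathcal{X},\Psi),(Y,G)\big)\;\simeq\;\varprojlim_{k\in K}\Big(\varprojlim_{x\in X_{k}}\varinjlim_{y\in Y}\mathrm{Map}_{\mathcal{C}}\big(F_{k}(x),G(y)\big)\Big)\;\simeq\;\varprojlim_{k\in K}\mathrm{Map}_{\mathrm{Fam}(\mathcal{C})}\big(D(k),(Y,G)\big);
\end{equation*}
this chain of equivalences is induced by restriction along the cocone, and — since $K$ is an $\infty$-groupoid — its right-hand side is exactly the space of cocones under $D$ with vertex $(Y,G)$, so $(\mathcal{X},\Psi)$ has the universal property of $\colim D$. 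Specializing to $K=X$ and $D=\sigma\circ F$ (so that every shape $X_{x}$ is $*$, $\mathcal{X}=\colim_{x\in X}*=X$, and $\Psi=F$) yields the colimit presentation
\begin{equation}\label{colimpres}
(X,F)\;\simeq\;\colim_{x\in X}\,\sigma\big(F(x)\big)\qquad\text{in }\mathrm{Fam}(\mathcal{C}).
\end{equation}

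For the second bullet, I would first note that, since $\mathcal{D}$ admits all $\mathrm{Grpd}_{\infty}$-indexed colimits, the first bullet applied to $\mathcal{D}$ provides a colimit functor $\colim\colon\mathrm{Fam}(\mathcal{D})\to\mathcal{D}$, $(Y,G)\mapsto\colim_{y\in Y}G(y)$, which preserves $\mathrm{Grpd}_{\infty}$-indexed colimits by interchange of iterated colimits; likewise the functorial action $\mathrm{Fam}(f)\colon\mathrm{Fam}(\mathcal{C})\to\mathrm{Fam}(\mathcal{D})$, $(X,F)\mapsto(X,f\circ F)$, of the endofunctor $\mathrm{Fam}(-)$ preserves $\mathrm{Grpd}_{\infty}$-indexed colimits. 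Set $L(f):=\colim\circ\,\mathrm{Fam}(f)\colon\mathrm{Fam}(\mathcal{C})\to\mathcal{D}$, so that $L(f)(X,F)\simeq\colim_{x\in X}f(F(x))$; by the preceding sentence $L(f)$ lies in $[\mathrm{Fam}(\mathcal{C}),\mathcal{D}]_{\star}$, so $f\mapsto L(f)$ defines a functor $L\colon[\mathcal{C},\mathcal{D}]\to[\mathrm{Fam}(\mathcal{C}),\mathcal{D}]_{\star}$. Let $R:=(-)\circ\sigma\colon[\mathrm{Fam}(\mathcal{C}),\mathcal{D}]\to[\mathcal{C},\mathcal{D}]$. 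Then $R(L(f))(c)\simeq L(f)(*,c)=\colim_{*}f(c)\simeq f(c)$, so $R\circ L\simeq\mathrm{id}$; and for $G\in[\mathrm{Fam}(\mathcal{C}),\mathcal{D}]_{\star}$,
\begin{equation*}
L(R(G))(X,F)\;=\;\colim_{x\in X}G\big(\sigma F(x)\big)\;\simeq\;G\Big(\colim_{x\in X}\sigma F(x)\Big)\;\simeq\;G(X,F)
\end{equation*}
by $\mathrm{Grpd}_{\infty}$-cocontinuity of $G$ and \eqref{colimpres}, so $L\circ R\simeq\mathrm{id}$ on $[\mathrm{Fam}(\mathcal{C}),\mathcal{D}]_{\star}$. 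For naturality of these equivalences I would observe that $L$ is left adjoint to $R$ — it is the left Kan extension along $\sigma$ — with unit $\mathrm{id}\Rightarrow R\circ L$ an equivalence because $\sigma$ is fully faithful and counit $L\circ R\Rightarrow\mathrm{id}$ an equivalence on $[\mathrm{Fam}(\mathcal{C}),\mathcal{D}]_{\star}$ by \eqref{colimpres}; hence $L$ and $R$ are inverse equivalences, which is the asserted equivalence.

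I expect the main obstacle to be the first bullet: showing that $\mathrm{Grpd}_{\infty}$-indexed colimits in $\mathrm{Fam}(\mathcal{C})$ are computed by ``taking the colimit of shapes and gluing the arrows'', with all coherence data under control. The cleanest route is the structural one above — a diagram over an $\infty$-groupoid valued in the total space of a Cartesian fibration, together with \eqref{mapFam} — rather than manipulating cocones object by object. One should also take mild care in the second bullet that $\sigma$ is \emph{not} $\mathrm{Grpd}_{\infty}$-cocontinuous, so cocontinuity of $L(f)$ is not automatic from ``left Kan extension along a cocontinuous functor'' but is the (short) interchange computation indicated; everything else is formal. For a more black-box treatment of the universal property one can instead use that \eqref{mapFam} exhibits $(X,F)\mapsto\colim_{x\in X}y(F(x))$ — with $y$ the Yoneda embedding — as a fully faithful functor $\mathrm{Fam}(\mathcal{C})\hookrightarrow[\mathcal{C}^{op},\mathrm{Grpd}_{\infty}]$ whose essential image is the closure of the representables under $\mathrm{Grpd}_{\infty}$-indexed colimits, and then invoke the universal property of that closure as the free $\mathrm{Grpd}_{\infty}$-cocompletion of $\mathcal{C}$.
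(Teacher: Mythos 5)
Your proposal is correct, and it is worth noting that the paper itself states Proposition \ref{uni} without proof: the only traces of an argument are the later explicit ``Construction of (co)limits in $\mathrm{Fam}(\mathcal{C})$'' (which, like your first bullet, takes the shape of $\varinjlim(D)$ to be $\varinjlim(\Pi_{\infty}\circ D)$ with the arrow induced on the colimit) and Proposition 3.12 (the identification of $\mathrm{Fam}(\mathcal{C})$ with the closure of the representables in $[\mathcal{C}^{op},\mathrm{Grpd}_{\infty}]$ under $\infty$-groupoid-indexed colimits, which is exactly the ``black-box'' route you sketch in your last paragraph). So your argument fills a genuine gap rather than paralleling an existing one. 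What you add beyond the paper: (i) an actual verification of the universal property of $(\mathcal{X},\Psi)$ via the mapping-space formula $\varprojlim_{x}\varinjlim_{y}\mathrm{Map}_{\mathcal{C}}(F(x),G(y))$, rather than just a description of the candidate colimit; (ii) the colimit presentation $(X,F)\simeq\varinjlim_{x\in X}\sigma(F(x))$ and the ensuing Kan-extension argument for the second bullet, which the paper omits entirely; and (iii) the correct observation that $\sigma$ itself is not cocontinuous, so cocontinuity of $L(f)$ must come from interchange of iterated colimits. Two points deserve to be made explicit rather than implicit: the identification of the lift $D$ with a single functor $\Psi:\mathcal{X}\rightarrow\mathcal{C}$ uses that every section of a Cartesian fibration over an $\infty$-groupoid is automatically a Cartesian section (functors out of $\infty$-groupoids invert all morphisms, and the equivalences in the total category of $\int[-,\mathcal{C}]$ over equivalences of shapes are precisely the Cartesian edges), and the passage from the pointwise equivalences $R\circ L\simeq\mathrm{id}$, $L\circ R\simeq\mathrm{id}$ to an equivalence of functor $\infty$-categories really does require the adjunction $L\dashv R$ you invoke, so the identification $L(f)\simeq\mathrm{Lan}_{\sigma}f$ (cofinality of $X$ in the comma category $\sigma\downarrow(X,F)$) should be recorded. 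Neither is a gap; both are standard.
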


\begin{rem}
Generally, if $\mathcal{C}$ is an $n$-category, then $\mathrm{Fam}(\mathcal{C})$ is the universal colimit completion of $\mathcal{C}$ with respect to diagrams indexed by $(n-1)$-groupoids. The construction is exactly the same except $\mathrm{Fam}(\mathcal{C})$ has objects $(X,F)$ in which $X$ is an $(n-1)$-groupoid and $F:X\rightarrow\mathcal{C}$ is a functor. For example if $\mathcal{C}$ is an ordinary category, then $\mathrm{Fam}(\mathcal{C})$ is its coproduct completion.
\end{rem}
\vspace{0mm}
The following proposition reflects a general principle of colimit completions in ($\infty$-)categories: to form the universal completion of a category $\mathcal{C}$ under colimits of a certain shape, one takes something resembling the closure of representable ($\infty$-)presheaves on $\mathcal{C}$ under colimits of that shape.
\vspace{0mm}
\begin{prop}
Let $[\mathcal{C}^{op},\mathrm{Grpd}_{\infty}]_{\sharp}$ denote the full subcategory of $[\mathcal{C}^{op},\mathrm{Grpd}_{\infty}]$ spanned by colimits of representable $\infty$-prestacks indexed by $\infty$-groupoids and let $y:\mathcal{C}\hookrightarrow[\mathcal{C}^{op},\mathrm{Grpd}_{\infty}]$ denote the Yoneda embedding. Then the functor
\begin{align}
\mathrm{Fam}(\mathcal{C})\rightarrow[\mathcal{C}^{op},\mathrm{Grpd}_{\infty}]_{\sharp}
\end{align}
that sends $(X,F)\mapsto\varinjlim(y\circ F)$ induces an equivalence of categories.
\end{prop}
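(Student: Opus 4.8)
The plan is to recognise the displayed functor, which I will call $\Phi\colon\mathrm{Fam}(\mathcal{C})\to[\mathcal{C}^{op},\mathrm{Grpd}_{\infty}]_{\sharp}$, as a model for the essentially unique $\mathrm{Grpd}_{\infty}$-colimit-preserving extension of the Yoneda embedding produced by Proposition \ref{uni}, and then to verify that it is fully faithful and essentially surjective. First, $\Phi$ is well defined: for $(X,F)$ in $\mathrm{Fam}(\mathcal{C})$ the composite $y\circ F\colon X\to[\mathcal{C}^{op},\mathrm{Grpd}_{\infty}]$ is a diagram of representables indexed by the $\infty$-groupoid $X$, so $\varinjlim(y\circ F)$ lands in $[\mathcal{C}^{op},\mathrm{Grpd}_{\infty}]_{\sharp}$ by definition; a morphism $(\varphi,\varphi_{\star})\colon(X_{1},F_{1})\to(X_{2},F_{2})$ is sent to the colimit over $X_{1}$ of the whiskered natural transformation $y\varphi_{\star}$, followed by the canonical comparison map attached to $\varphi\colon X_{1}\to X_{2}$ on colimits. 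Unwinding the definition of morphisms in $\mathrm{Fam}(\mathcal{C})$ shows $(X,F)\simeq\colim_{x\in X}\sigma(F(x))$, and since the colimit of a diagram indexed by the point is the diagram itself we get $\Phi\circ\sigma\simeq y$ and $\Phi(X,F)=\varinjlim(y\circ F)\simeq\colim_{x\in X}\Phi(\sigma(F(x)))$; thus $\Phi$ is the colimit-preserving extension of $y$ along $\sigma$.

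Essential surjectivity is immediate from the definition of $[\mathcal{C}^{op},\mathrm{Grpd}_{\infty}]_{\sharp}$: an object of the target is by hypothesis of the form $\colim_{x\in X}y(c_{x})$ for an $\infty$-groupoid $X$ and objects $c_{x}\in\mathcal{C}$, and bundling $x\mapsto c_{x}$ into a functor $F\colon X\to\mathcal{C}$ exhibits it as $\Phi(X,F)$.

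For full faithfulness I must show that the comparison map
\begin{equation*}
\mathrm{Map}_{\mathrm{Fam}(\mathcal{C})}\bigl((X_{1},F_{1}),(X_{2},F_{2})\bigr)\longrightarrow\mathrm{Map}_{[\mathcal{C}^{op},\mathrm{Grpd}_{\infty}]}\bigl(\varinjlim(y\circ F_{1}),\varinjlim(y\circ F_{2})\bigr)
\end{equation*}
is an equivalence for all $(X_{1},F_{1}),(X_{2},F_{2})$. Writing the source as $\colim_{x_{1}\in X_{1}}\sigma(F_{1}(x_{1}))$ and using that mapping out of a colimit is a limit (on both sides), it is enough to treat the case in which the source is $\sigma(c)=(*,c)$ for a single $c\in\mathcal{C}$. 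On the target, the Yoneda lemma gives $\mathrm{Map}\bigl(y(c),\varinjlim(y\circ F_{2})\bigr)\simeq\bigl(\varinjlim(y\circ F_{2})\bigr)(c)$, and since colimits of presheaves are computed pointwise this is $\colim_{x_{2}\in X_{2}}\mathrm{Map}_{\mathcal{C}}(c,F_{2}(x_{2}))$; as $X_{2}$ is an $\infty$-groupoid, this colimit is the total space of the left fibration over $X_{2}$ with fibre $\mathrm{Map}_{\mathcal{C}}(c,F_{2}(x_{2}))$ over $x_{2}$. On the source, a map $(*,c)\to(X_{2},F_{2})$ is precisely a point of $X_{2}$ together with a map in $\mathcal{C}$ from $c$ to the corresponding member of $F_{2}$, so $\mathrm{Map}_{\mathrm{Fam}(\mathcal{C})}\bigl((*,c),(X_{2},F_{2})\bigr)$ is that same total space, and one checks that $\Phi$ induces exactly this identification. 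Hence $\Phi$ is fully faithful, and combined with essential surjectivity it is an equivalence.

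The genuine work is coherence-theoretic: making the above identifications natural in both variables --- in particular the presentation $(X,F)\simeq\colim_{x}\sigma(F(x))$, the pointwise formula for colimits of presheaves, and the identification of an $\infty$-groupoid-indexed colimit with a total space --- at the level of $\infty$-categories rather than homotopy categories. An alternative that sidesteps the hands-on computation is to show directly that $[\mathcal{C}^{op},\mathrm{Grpd}_{\infty}]_{\sharp}$ itself satisfies the universal property of Proposition \ref{uni} relative to $y$: it is closed under $\mathrm{Grpd}_{\infty}$-indexed colimits in $[\mathcal{C}^{op},\mathrm{Grpd}_{\infty}]$ because the class of $\infty$-groupoid-indexed colimits is closed under composition (via the Grothendieck construction: the total space of an $\infty$-groupoid-indexed family of $\infty$-groupoids is again an $\infty$-groupoid, so a $\mathrm{Grpd}_{\infty}$-colimit of $\mathrm{Grpd}_{\infty}$-colimits of representables is again of that form), and restriction along $y$ then exhibits it as the free cocompletion of $\mathcal{C}$ under $\mathrm{Grpd}_{\infty}$-indexed colimits. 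Then $\Phi$ is a $\mathrm{Grpd}_{\infty}$-colimit-preserving functor between two universal such completions of $\mathcal{C}$ compatible with the structure maps $\sigma$ and $y$, hence an equivalence by uniqueness of the universal completion.
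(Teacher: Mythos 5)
The paper states this proposition without providing any proof, so there is nothing of its own to compare your argument against; I can only assess your proposal on its merits, and it is essentially correct. Your first route is the standard argument for identifying a free cocompletion under a class of colimits with the closure of the representables under those colimits inside presheaves: well-definedness and essential surjectivity are immediate (for the latter you implicitly use that $y$ is fully faithful, so an $X$-indexed diagram of representables factors through $\mathcal{C}$ up to equivalence --- worth saying explicitly), and the reduction of full faithfulness to a corepresentable source via ``mapping out of a colimit is a limit'' is legitimate because $\Phi$ carries the canonical presentation $(X,F)\simeq\varinjlim_{x\in X}\sigma(F(x))$ (which follows from the paper's explicit construction of colimits in $\mathrm{Fam}(\mathcal{C})$: the shape of that colimit is $\varinjlim_{x\in X}\ast\simeq X$) to the defining colimit presentation of $\varinjlim(y\circ F)$. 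The key computation --- Yoneda plus pointwise colimits of presheaves identifying $\mathrm{Hom}(y(c),\varinjlim(y\circ F_{2}))$ with the total space of the unstraightening of $x_{2}\mapsto\mathrm{Hom}_{\mathcal{C}}(c,F_{2}(x_{2}))$ over $X_{2}$, which visibly agrees with $\mathrm{Hom}_{\mathrm{Fam}(\mathcal{C})}((\ast,c),(X_{2},F_{2}))$ --- is right. You correctly flag that the remaining work is making these identifications coherent, and your second route (showing $[\mathcal{C}^{op},\mathrm{Grpd}_{\infty}]_{\sharp}$ is closed under $\mathrm{Grpd}_{\infty}$-indexed colimits because such colimits compose, via the total space of an $\infty$-groupoid-indexed family of $\infty$-groupoids being an $\infty$-groupoid, and then invoking uniqueness of the universal completion from Proposition \ref{uni}) is exactly the clean way to discharge that coherence burden, since it reduces the statement to standard results on $\mathcal{K}$-cocompletions. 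Either version would serve as a proof the paper itself omits.
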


\vspace{0mm}
We now give an explicit construction of (co)limits in $\mathrm{Fam}(\mathcal{C})$. In this construction and in Lemmae \ref{lims} and \ref{connect}, we will use some notation introduced in \S4, in particular $\Pi_{\infty}$.
\begin{proof}[Construction of (co)limits in $\mathrm{Fam}(\mathcal{C})$]
Fix an $\infty$-category $K$ and let $D:K\rightarrow\mathrm{Fam}(\mathcal{C})$ be a diagram. The shape $\Pi_{\infty}(\varinjlim(D))$ of $\varinjlim(D)$ (when it exists) is given by:
\begin{align}
\Pi_{\infty}(\varinjlim(D))=\varinjlim(K\xrightarrow{D}\mathrm{Fam}(\mathcal{C})\xrightarrow{\Pi_{\infty}}\mathrm{Grpd}_{\infty})
\end{align}
 By definition, each $x\in \Pi_{\infty}\circ D(K)$ is equipped with a map $\Pi_{\infty}(D(\beta))\rightarrow\mathcal{C}$ which commutes with the required triangles. Thus there is  induced a functor:
  \begin{align}
 \varinjlim(K\xrightarrow{D}\mathrm{Fam}(\mathcal{C})\xrightarrow{\Pi_{\infty}}\mathrm{Grpd}_{\infty})\rightarrow\mathcal{C}
 \end{align}
 
 This is precisely the arrow of $\varinjlim(D)$. Now we describe limits\footnote{Thanks to MathOverflow user Kyle Ferendo for describing this to us.}. As before, let $D:K\rightarrow\mathrm{Fam}(\mathcal{C})$ be a diagram. the shape $\Pi_{\infty}(\varprojlim(D))$ of $\varprojlim(D)$ is given by the limit:
\begin{align}
\Pi_{\infty}(\varprojlim(D))=\varprojlim(K\xrightarrow{D}\mathrm{Fam}(\mathcal{C})\xrightarrow{\Pi_{\infty}}\mathrm{Grpd}_{\infty})
\end{align}
For each $x\in K$, there is a canonical projection $p_{x}:\varprojlim(\Pi_{\infty}\circ D)\rightarrow \Pi_{\infty}\circ D(x)$. But by definition, for such $x$ we have functors $\gamma_{x}:x\rightarrow\mathcal{C}$, so we can define a natural functor $\zeta:K\rightarrow[\varprojlim(\Pi_{\infty}\circ D),\mathcal{C}]$ by $\zeta(x)=\gamma_{x}\circ p_{x}:\varprojlim(\Pi_{\infty}\circ D)\rightarrow \mathcal{C}$. The arrow of $\varprojlim(D)$ is given by $\varprojlim(\zeta)$.
\end{proof}

\begin{rem}
Actually, the colimit in (3) must be taken in $\mathrm{Cat}_{\infty}$ in order for the desired universal property to kick in ($\mathcal{C}$ is not necessarily an $\infty$-groupoid), but this does not affect anything.
\end{rem}

\begin{lem}\label{lims}
Fix an $\infty$-category $\mathcal{C}$ and a small $\infty$-category $\lambda$. Suppose limits indexed by $\lambda$ exist in $\mathcal{C}$. Then limits indexed by $\lambda$ exist in $\mathrm{Fam}(\mathcal{C})$. 
\end{lem}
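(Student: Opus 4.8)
The plan is to run the explicit construction of limits in $\mathrm{Fam}(\mathcal{C})$ recorded just above and to check that, under the hypothesis on $\mathcal{C}$, each ingredient is available and the resulting object really is a $\lambda$-limit. So fix a diagram $D:\lambda\rightarrow\mathrm{Fam}(\mathcal{C})$ and write $D(x)=(\Pi_{\infty}D(x),\gamma_{x})$ with $\gamma_{x}:\Pi_{\infty}D(x)\rightarrow\mathcal{C}$. First I would form the shape: since $\mathrm{Grpd}_{\infty}$ is complete, the limit $L:=\varprojlim(\Pi_{\infty}\circ D)$ exists in $\mathrm{Grpd}_{\infty}$, with projections $p_{x}:L\rightarrow\Pi_{\infty}D(x)$. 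These assemble, exactly as in the construction above, into a diagram $\zeta:\lambda\rightarrow[L,\mathcal{C}]$, $\zeta(x)=\gamma_{x}\circ p_{x}$. Next I would form the arrow: because limits in a functor $\infty$-category $[L,\mathcal{C}]$ are computed pointwise in $\mathcal{C}$ and $\mathcal{C}$ admits $\lambda$-indexed limits by hypothesis, the limit $G:=\varprojlim(\zeta)$ exists in $[L,\mathcal{C}]$. I would then set $\varprojlim(D):=(L,G)$, with limiting cone given componentwise by $p_{x}$ together with the structure maps $G\Rightarrow\gamma_{x}p_{x}$ of the limit cone of $\zeta$.

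The remaining task is to verify that $(L,G)$ with this cone genuinely satisfies the universal property, i.e.\ that for every $(Y,H)\in\mathrm{Fam}(\mathcal{C})$ the canonical map
\[
\mathrm{Map}_{\mathrm{Fam}(\mathcal{C})}\big((Y,H),(L,G)\big)\longrightarrow\varprojlim_{x\in\lambda}\mathrm{Map}_{\mathrm{Fam}(\mathcal{C})}\big((Y,H),D(x)\big)
\]
is an equivalence of $\infty$-groupoids. For this I would use that, straight from the definition of morphisms in $\mathrm{Fam}(\mathcal{C})$, the mapping space $\mathrm{Map}_{\mathrm{Fam}(\mathcal{C})}((Y,H),(X,F))$ is the total space of a left fibration over $\mathrm{Map}_{\mathrm{Grpd}_{\infty}}(Y,X)$ with fiber $\mathrm{Map}_{[Y,\mathcal{C}]}(H,F\circ\psi)$ over $\psi:Y\rightarrow X$. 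On base spaces the comparison map is the equivalence $\mathrm{Map}_{\mathrm{Grpd}_{\infty}}(Y,L)\simeq\varprojlim_{x}\mathrm{Map}_{\mathrm{Grpd}_{\infty}}(Y,\Pi_{\infty}D(x))$ expressing $L$ as a limit. Over a point $\psi:Y\rightarrow L$, with components $\psi_{x}:=p_{x}\circ\psi$, the fiber on the left is $\mathrm{Map}_{[Y,\mathcal{C}]}(H,G\circ\psi)$; since precomposition $\psi^{*}:[L,\mathcal{C}]\rightarrow[Y,\mathcal{C}]$ preserves limits (limits being pointwise on both sides) one has $G\circ\psi\simeq\varprojlim_{x}\gamma_{x}\psi_{x}$, and since mapping out of a fixed object preserves limits this fiber equals $\varprojlim_{x}\mathrm{Map}_{[Y,\mathcal{C}]}(H,\gamma_{x}\psi_{x})$, which (using that passage to fibers commutes with limits) is exactly the fiber of the right-hand side over the point determined by $\psi$. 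Since a map of $\infty$-groupoids over a fixed base that is an equivalence on every fiber is an equivalence, this proves the universal property.

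The genuinely formal inputs here — completeness of $\mathrm{Grpd}_{\infty}$, pointwise limits in functor $\infty$-categories, and commutation of limits with corepresentable functors, with precomposition, and with passage to fibers — are all standard, so the only step needing any care is the bookkeeping of the previous paragraph: isolating the left-fibration description of mapping spaces in $\mathrm{Fam}(\mathcal{C})$ and checking the comparison map respects it. I expect this to be the main (and essentially the only) obstacle, and it is largely expository, since the construction preceding the statement already contains the substance. Alternatively, one can package the bookkeeping once and for all by observing that $\Pi_{\infty}:\mathrm{Fam}(\mathcal{C})\rightarrow\mathrm{Grpd}_{\infty}$ is a Cartesian fibration — the Cartesian lift of $\varphi:X_{1}\rightarrow X_{2}$ at $(X_{2},F)$ being $(\varphi,\mathrm{id}):(X_{1},F\circ\varphi)\rightarrow(X_{2},F)$ — classified by $X\mapsto[X,\mathcal{C}]$ with precomposition functoriality, and then invoking the standard fact that the total space of a Cartesian fibration admits $\lambda$-indexed limits whenever the base does, every fiber does, and the transition functors preserve them; for us this reduces precisely to completeness of $\mathrm{Grpd}_{\infty}$ and the hypothesis on $\mathcal{C}$.
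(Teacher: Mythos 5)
Your proposal is correct and follows essentially the same route as the paper: the paper's proof is exactly the two-sentence observation that the explicit construction of $\varprojlim(D)$ reduces to a limit in $\mathrm{Grpd}_{\infty}$ (which always exists) plus a pointwise $\lambda$-limit in $[\varprojlim(\Pi_{\infty}\circ D),\mathcal{C}]$ (which exists by hypothesis). Your additional verification of the universal property via the fibration description of mapping spaces, and the alternative packaging via Cartesian fibrations, are correct details that the paper simply leaves implicit.
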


\begin{proof}
This follows from the fact that the limit of a functor $D:\lambda\rightarrow\mathrm{Fam}(\mathcal{C})$ is computed as a combination of $\lambda$-indexed limits of $\infty$-groupoids (which are guaranteed to exist) and $\lambda$-indexed limits in the functor category $[\varprojlim(\Pi_{\infty}\circ D),\mathcal{C}]$. The claim holds since limits in $[\varprojlim(\Pi_{\infty}\circ D),\mathcal{C}]$ are computed point-wise as limits in $\mathcal{C}$.
\end{proof}

\begin{defn}

Let $\mathbb{H}$ be an $\infty$-category. Recall that an object $X\in\mathbb{H}$ is \textit{n-truncated} if the mapping $\infty$-groupoids $\mathrm{Hom}_{\mathbb{H}}(Y,X)$ are $n$-groupoids for all $Y\in\mathbb{H}$. The \textit{n-truncation} functor $\tau_{\leq n}:\mathbb{H}\rightarrow\tau_{\leq n}\mathbb{H}$ is left adjoint to the full inclusion $\tau_{\leq n}\mathbb{H}\hookrightarrow\mathbb{H}$ of $n$-truncated objects in $\mathbb{H}$.
\end{defn}
\begin{prop}
Let $\mathcal{C}$ be an $\infty$-category. An object $(X,F)\in\mathrm{Fam}(\mathcal{C})$ is $0$-truncated if and only if $X$ is a discrete $\infty$-groupoid/set.
\end{prop}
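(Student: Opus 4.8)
The plan is to compute the mapping $\infty$-groupoids of $\mathrm{Fam}(\mathcal{C})$ and read off the truncation condition directly from the definition. Unwinding a morphism $(X_{1},F_{1})\to(X_{2},F_{2})$ as a pair $(\varphi,\varphi_{\star})$ with $\varphi\colon X_{1}\to X_{2}$ and $\varphi_{\star}\colon F_{1}\Rightarrow F_{2}\circ\varphi$, one sees that for each such pair of objects there is a natural map of $\infty$-groupoids
\[
\mathrm{Hom}_{\mathrm{Fam}(\mathcal{C})}\big((X_{1},F_{1}),(X_{2},F_{2})\big)\longrightarrow\mathrm{Hom}_{\mathrm{Grpd}_{\infty}}(X_{1},X_{2})
\]
with fiber over $\varphi$ the space of natural transformations $\mathrm{Hom}_{[X_{1},\mathcal{C}]}(F_{1},\varphi^{*}F_{2})$; equivalently, this exhibits $\Pi_{\infty}$ as the Cartesian fibration classified by $X\mapsto[X,\mathcal{C}]$, $\varphi\mapsto\varphi^{*}$. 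I would establish this description first, since both directions follow by inspecting this fibration together with the standard fact that the $0$-truncated objects of any $\infty$-category are closed under limits.

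For the ``if'' direction, suppose $X$ is a discrete $\infty$-groupoid. Given $(Y,G)$, the base $\mathrm{Hom}_{\mathrm{Grpd}_{\infty}}(Y,X)\simeq\mathrm{Hom}_{\mathrm{Grpd}_{\infty}}(\pi_{0}Y,X)$ is a set, and each fiber $\mathrm{Hom}_{[Y,\mathcal{C}]}(G,\varphi^{*}F)$ is computed as the limit $\varprojlim_{y\in Y}\mathrm{Hom}_{\mathcal{C}}(G(y),F(\varphi(y)))$; since each $\mathrm{Hom}_{\mathcal{C}}(-,-)$ is $0$-truncated (here one uses that $\mathcal{C}$ has set-valued hom-spaces) and $0$-truncated objects are limit-closed, the fibers are sets. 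An extension of a $0$-truncated space by $0$-truncated fibers is again $0$-truncated, by the long exact sequence of homotopy groups, so $\mathrm{Hom}_{\mathrm{Fam}(\mathcal{C})}((Y,G),(X,F))$ is a set for every $(Y,G)$, i.e.\ $(X,F)$ is $0$-truncated.

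For the ``only if'' direction, assume $(X,F)$ is $0$-truncated. I would test against the objects $\sigma(c)=(\ast,\gamma_{c})$: over $\mathrm{Hom}_{\mathrm{Grpd}_{\infty}}(\ast,X)\simeq X$ the fibration above becomes the left fibration classified by $x\mapsto\mathrm{Hom}_{\mathcal{C}}(c,F(x))$, so $\mathrm{Hom}_{\mathrm{Fam}(\mathcal{C})}(\sigma(c),(X,F))\simeq\varinjlim_{x\in X}\mathrm{Hom}_{\mathcal{C}}(c,F(x))$. If $\mathcal{C}$ has an initial object $0$ this finishes things at once, since $\varinjlim_{x\in X}\mathrm{Hom}_{\mathcal{C}}(0,F(x))\simeq\varinjlim_{x\in X}\ast\simeq X$, forcing $X$ to be $0$-truncated. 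In general I would instead take $c=F(x_{0})$ and chase the long exact sequence of the fibration $\mathrm{Hom}_{\mathcal{C}}(F(x_{0}),F(x_{0}))\to\varinjlim_{x\in X}\mathrm{Hom}_{\mathcal{C}}(F(x_{0}),F(x))\to X$ based at $\mathrm{id}_{F(x_{0})}$: $0$-truncatedness of the total space gives $\pi_{n}(X,x_{0})\cong\pi_{n-1}(\mathrm{Hom}_{\mathcal{C}}(F(x_{0}),F(x_{0})),\mathrm{id})$ for $n\geq2$, hence $\pi_{n}(X)=0$ for $n\geq2$, after which one still has to trivialize $\pi_{1}(X)$.

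The main obstacle is exactly this last step. Testing against a single $\sigma(c)$ only embeds $\pi_{1}(X,x_{0})$ into an automorphism set of $\mathcal{C}$ rather than killing it, so one must additionally test against $(X,F)$ itself and control the monodromy of the fibration over $\mathrm{Hom}_{\mathrm{Grpd}_{\infty}}(X,X)$ (whose $\pi_{1}$ can act nontrivially on the fiber), or argue more globally. This is also where the hypotheses on $\mathcal{C}$ matter: the ``if'' direction is clean as soon as the hom-spaces of $\mathcal{C}$ are sets, whereas the ``only if'' direction is genuinely delicate, and I would expect the proof to require the most care — and possibly an extra assumption on $\mathcal{C}$ — precisely there.
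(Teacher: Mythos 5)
The paper states this proposition without proof, so your proposal can only be measured against the statement itself --- and your fibration description of the mapping spaces, $\mathrm{Hom}_{\mathrm{Fam}(\mathcal{C})}((Y,G),(X,F))\to\mathrm{Hom}_{\mathrm{Grpd}_{\infty}}(Y,X)$ with fiber $\mathrm{Hom}_{[Y,\mathcal{C}]}(G,\varphi^{*}F)$ over $\varphi$, is the correct starting point. But the two places where you hedge are precisely where the argument collapses, and in fact neither implication is true at the stated level of generality. For the ``if'' direction, your parenthetical ``here one uses that $\mathcal{C}$ has set-valued hom-spaces'' smuggles in a hypothesis that is not in the proposition: $\mathcal{C}$ is an arbitrary $\infty$-category, so the fibers $\varprojlim_{y}\mathrm{Hom}_{\mathcal{C}}(G(y),F(\varphi(y)))$ need not be discrete. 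Concretely, in $\mathrm{Fam}(\mathrm{Grpd}_{\infty})$ the object $(\ast,S^{1})$ has discrete shape, yet $\mathrm{Hom}(\sigma(\ast),(\ast,S^{1}))\simeq\mathrm{Map}_{\mathrm{Grpd}_{\infty}}(\ast,S^{1})\simeq S^{1}$ is not a set, so $(\ast,S^{1})$ is not $0$-truncated. The ``if'' direction genuinely requires the hom-spaces of $\mathcal{C}$ to be $0$-truncated.

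For the ``only if'' direction there are two failures. First, the step ``$\pi_{n}(X,x_{0})\cong\pi_{n-1}(\mathrm{Hom}_{\mathcal{C}}(F(x_{0}),F(x_{0})),\mathrm{id})$ for $n\geq2$, hence $\pi_{n}(X)=0$'' is a non sequitur unless the endomorphism space is discrete --- again an unstated hypothesis on $\mathcal{C}$. Second, the $\pi_{1}$ obstruction you identify is not a loose end that more careful monodromy bookkeeping will close; it is a counterexample. Take $\mathcal{C}=\mathbf{B}G$ for a nontrivial group $G$. Then $\mathrm{Fam}(\mathbf{B}G)\simeq(\mathrm{Grpd}_{\infty})_{/\mathbf{B}G}$, and $(\mathbf{B}G,\mathrm{id})$ is the terminal object of this slice, hence $(-2)$-truncated and a fortiori $0$-truncated, while its shape $\mathbf{B}G$ is not discrete. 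In your long exact sequence this is exactly the phenomenon that $\pi_{1}(X,x_{0})$ can act freely on $\pi_{0}$ of the fiber, so the total space is discrete without the base being so; more generally any $F$ that is faithful on automorphism groups defeats every test object $\sigma(c)$. So the proposition needs additional hypotheses (or $\tau_{\leq0}\mathrm{Fam}(\mathcal{C})$ should simply be \emph{defined} as the families with discrete shape, which is how the paper actually uses it later); your diagnosis of where the difficulty lies is accurate, but the difficulty is in the statement, not merely in the proof.
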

\begin{lem}\label{connect}    
Let $\mathcal{C}$ be an $\infty$-category. An object $(X,F)\in\mathrm{Fam}(\mathcal{C})$ is connected if and only if $X$ is a connected $\infty$-groupoid.             
\end{lem}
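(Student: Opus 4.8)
The plan is to reduce the statement to the criterion for connectedness in extensive $\infty$-categories recorded in the Remark after Definition~2.3, using the explicit description of coproducts in $\mathrm{Fam}(\mathcal{C})$ coming from the construction of colimits. Since $\mathrm{Fam}(\mathcal{C})$ is extensive, $(X,F)$ is connected exactly when in every coproduct decomposition $(X,F)\simeq A\coprod B$ precisely one of $A,B$ is initial; likewise $\mathrm{Grpd}_{\infty}$ is extensive, so $X$ is a connected $\infty$-groupoid exactly when in every decomposition $X\simeq X_{1}\coprod X_{2}$ precisely one of $X_{1},X_{2}$ is empty (categorical connectedness of $\infty$-groupoids agreeing with the usual notion by the example bridging the two). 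Thus it suffices to match the coproduct decompositions of $(X,F)$ with those of its shape $X$, compatibly with being initial.

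First I would record two auxiliary facts. (i) Coproducts in $\mathrm{Fam}(\mathcal{C})$ are computed ``levelwise'': for a set-indexed family $\{(X_{i},F_{i})\}_{i}$, the coproduct has shape $\coprod_{i}X_{i}$ and arrow the copairing $[F_{i}]_{i}\colon\coprod_{i}X_{i}\to\mathcal{C}$. This is the special case of the colimit construction of \S3 in which the indexing $\infty$-category is a discrete set, combined with the standard equivalence $[\coprod_{i}X_{i},\mathcal{C}]\simeq\prod_{i}[X_{i},\mathcal{C}]$ of functor $\infty$-categories; in particular $\Pi_{\infty}$ preserves coproducts, being cocontinuous as the colimit formula asserts. (ii) An object $(Y,G)\in\mathrm{Fam}(\mathcal{C})$ is initial if and only if $Y\simeq\emptyset$: when $Y\simeq\emptyset$ the mapping space out of $(Y,G)$ is contractible for formal reasons, and conversely $\Pi_{\infty}$ preserves the initial object, so the shape of an initial object is $\emptyset$.

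With these in hand the two implications are short. If $X$ is connected and $(X,F)\simeq A\coprod B$ with $A=(X_{A},F_{A})$ and $B=(X_{B},F_{B})$, then applying $\Pi_{\infty}$ and invoking (i) gives $X\simeq X_{A}\coprod X_{B}$; connectedness of $X$ forces exactly one of $X_{A},X_{B}$ to be empty, whence by (ii) exactly one of $A,B$ is initial, so $(X,F)$ is connected. Conversely, if $(X,F)$ is connected and $X\simeq X_{1}\coprod X_{2}$, then restricting $F$ along the inclusions $X_{i}\hookrightarrow X$ and using (i) exhibits $(X,F)\simeq(X_{1},F|_{X_{1}})\coprod(X_{2},F|_{X_{2}})$; connectedness of $(X,F)$ makes exactly one summand initial, and by (ii) this says exactly one of $X_{1},X_{2}$ is empty, so $X$ is connected. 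I do not expect a serious obstacle; the only point needing care is fact (i), namely checking that the arrow of a coproduct really is the copairing and that restricting the copairing along $X_{i}\hookrightarrow X$ recovers $F_{i}$, which is exactly the content of the equivalence $[\coprod_{i}X_{i},\mathcal{C}]\simeq\prod_{i}[X_{i},\mathcal{C}]$ together with the colimit construction already established.
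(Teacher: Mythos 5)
Your proposal is correct and follows essentially the same route as the paper: reduce via extensivity to the binary coproduct criterion and then match decompositions of $(X,F)$ with decompositions of its shape using the fact that $\Pi_{\infty}$ preserves coproducts. You simply spell out the details the paper leaves implicit (that initial objects are exactly those with empty shape, and that a splitting of $X$ lifts to a splitting of $(X,F)$ by restricting $F$), which is a welcome elaboration but not a different argument.
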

\begin{proof}
Since $\mathrm{Fam}(\mathcal{C})$ is extensive, this reduces to showing that in any coproduct decomposition $(X,F)\simeq(X_{1},F_{1})\coprod(X_{2},F_{2})$, exactly one of the $(X_{i},F_{i})$ is not initial if and only if $X$ is connected as an $\infty$-groupoid. But this is immediate since $\Pi_{\infty}((X_{1},F_{1})\coprod(X_{2},F_{2}))=X_{1}\coprod X_{2}$.
\end{proof}


This implies that any object in $\mathrm{Fam}(\mathcal{C})$ can be written (essentially uniquely) as a coproduct of connected objects. Thus, we may regard $\mathrm{Fam}(\mathcal{C})$ for $\mathcal{C}$ an $\infty$-category in much the same way as we treat the 1-categorical case (since $\mathrm{Fam}(\mathcal{C})$ for $\mathcal{C}$ a 1-category is a coproduct completion, every object can be written as a coproduct of connected objects). This property of every object admitting coproduct decomposition into connected objects is of course shared by the category $\mathfrak{Top}$ of topological spaces and continuous maps.\\

\section{The Fundamental $\infty$-groupoid of a parametrized family}
In this section, we study phenomena pertaining to the homotopy theory of parametrized families in $\infty$-categories. In particular, we define the fundamental $(\infty$-)group(oid)s of objects in $\mathrm{Fam}(\mathcal{C})$. 
\begin{defn}
Let $\mathcal{C}$ be an $\infty$-category and consider $\mathrm{Fam}(\mathcal{C})$. The \textit{fundamental $\infty$-groupoid} functor $\Pi_{\infty}:\mathrm{Fam}(\mathcal{C})\rightarrow\mathrm{Grpd}_{\infty}$ is the functor sending $(X,F)\mapsto X$. Equivalently, it is the Grothendieck construction $\Pi_{\infty}\simeq\int[-,\mathcal{C}]$ of the representable prestack $[-,\mathcal{C}]:\mathrm{Grpd}_{\infty}^{op}\rightarrow\mathrm{Cat}_{\infty}$ that sends $X\mapsto[X,\mathcal{C}]$. 
\end{defn}


\vspace{0mm}
\begin{rem}
Let $\tau_{\leq 0}\mathrm{Fam}(\mathcal{C})$ denote the full subcategory of $\mathrm{Fam}(\mathcal{C})$ spanned by $0$-truncated objects, i.e families $(X,F)$ such that $X$ is a set. The inclusion $i:\tau_{\leq 0}\mathrm{Fam}(\mathcal{C})\subset\mathrm{Fam}(\mathcal{C})$ induces a commutative square of $\infty$-categories:

\begin{displaymath}
\xymatrixcolsep{2.0cm}
\xymatrixrowsep{1.1cm}
\xymatrix{
\mathrm{Fam}(\mathcal{C})  \ar[r]^{\Pi_{\infty}} &\mathrm{Grpd}_{\infty}\\
\tau_{\leq 0}\mathrm{Fam}(\mathcal{C})\ar@{^{(}->}[u]^{i} \ar[r]^{\Pi_{\infty}|_{\tau_{\leq 0}\mathrm{Fam}(\mathcal{C})}}
&\mathrm{Set}\ar@{^{(}->}[u]_{i}}
\end{displaymath}
\end{rem}
The bottom horizontal restriction map is the classical family fibration [2, Chapter 6.1].

\begin{prop}\label{adj}
Let $\mathcal{C}$ be an $\infty$-category with terminal object $*$. Define the functor $\Delta:\mathrm{Set}\rightarrow\mathrm{Fam}(\mathcal{C})$  by $\Delta:I\mapsto\coprod_{i\in I}\sigma(*)$, where $\sigma:\mathcal{C}\hookrightarrow\mathrm{Fam}(\mathcal{C})$ is the singleton embedding. Then the canonical restriction $\Pi_{\infty}|_{\tau_{\leq 0}\mathrm{Fam}(\mathcal{C})}:{\tau_{\leq 0}}\mathrm{Fam}(\mathcal{C})\rightarrow\mathrm{Set}$ of $\Pi_{\infty}$ to the full subcategory of 0-truncated objects in $\mathrm{Fam}(\mathcal{C})$ fits into an adjunction:
\begin{displaymath}
( \Pi_{\infty}|_{\tau_{\leq 0}\mathrm{Fam}(\mathcal{C})}\dashv\Delta ) 
  \;\;\; : \;\;\;
  \tau_{\leq 0}\mathrm{Fam}(\mathcal{C})
    {\stackrel{\overset{\Delta}{\longleftarrow}}{\underset{\Pi_{\infty}}{\longrightarrow}}}
  \mathrm{Set}
  \,.
\end{displaymath}
\end{prop}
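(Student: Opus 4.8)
The plan is to establish the adjunction by producing a natural equivalence of mapping spaces $\mathrm{Map}_{\mathrm{Fam}(\mathcal{C})}\big((X,F),\Delta(I)\big)\simeq\mathrm{Hom}_{\mathrm{Set}}\big(\Pi_{\infty}(X,F),I\big)$, natural in $(X,F)\in\tau_{\leq 0}\mathrm{Fam}(\mathcal{C})$ and $I\in\mathrm{Set}$. First I would unwind $\Delta$: since a coproduct in $\mathrm{Fam}(\mathcal{C})$ is formed by taking the coproduct of the shapes and assembling the arrows, $\Delta(I)=\coprod_{i\in I}\sigma(*)$ is just the family $(I,\mathrm{const}_{*})$ whose shape is the discrete $\infty$-groupoid $I$ and whose arrow is the constant functor at the terminal object $*\in\mathcal{C}$. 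In particular $\Delta(I)$ is $0$-truncated, so $\Delta$ indeed factors through $\tau_{\leq 0}\mathrm{Fam}(\mathcal{C})$, and $\Pi_{\infty}(\Delta(I))$ is canonically identified with $I$.

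The key step is the computation of the mapping space. By the definition of morphisms in $\mathrm{Fam}(\mathcal{C})$, a map $(X,F)\to(I,\mathrm{const}_{*})$ is a pair $(\varphi,\varphi_{\star})$ consisting of a function $\varphi\colon X\to I$ together with a natural transformation $\varphi_{\star}\colon F\Rightarrow\mathrm{const}_{*}\circ\varphi$ in the functor $\infty$-category $[X,\mathcal{C}]$. Now $\mathrm{const}_{*}\circ\varphi$ is again the constant functor at $*$, and since $*$ is terminal in $\mathcal{C}$ and limits in $[X,\mathcal{C}]$ are computed pointwise, $\mathrm{const}_{*}$ is a terminal object of $[X,\mathcal{C}]$. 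Hence the space of natural transformations $F\Rightarrow\mathrm{const}_{*}\circ\varphi$ is contractible, for every $\varphi$. Plugging this into the description of hom-spaces in $\mathrm{Fam}(\mathcal{C})$ (equivalently, into the Grothendieck-construction presentation $\Pi_{\infty}\simeq\int[-,\mathcal{C}]$), the assignment $(\varphi,\varphi_{\star})\mapsto\varphi$ exhibits $\mathrm{Map}_{\mathrm{Fam}(\mathcal{C})}\big((X,F),\Delta(I)\big)$ as equivalent to the discrete set $\mathrm{Hom}_{\mathrm{Set}}(X,I)=\mathrm{Hom}_{\mathrm{Set}}(\Pi_{\infty}(X,F),I)$.

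It remains to package this as an adjunction. One clean route is to verify that the equivalence above is natural in both variables --- which is immediate, since in one direction it simply forgets $\varphi_{\star}$ and in the other it adjoins the essentially unique natural transformation to $\mathrm{const}_{*}$, both operations being manifestly functorial --- and then invoke the standard characterization of an adjunction via a natural equivalence of hom-spaces. Equivalently, I would exhibit unit and counit directly: the counit is the identity equivalence $\Pi_{\infty}\circ\Delta\xrightarrow{\;\simeq\;}\mathrm{id}_{\mathrm{Set}}$, and the unit $\eta_{(X,F)}\colon(X,F)\to\Delta(\Pi_{\infty}(X,F))=(X,\mathrm{const}_{*})$ is the pair $(\mathrm{id}_{X},!)$ with $!$ the unique map $F\Rightarrow\mathrm{const}_{*}$; the two triangle identities then collapse to the uniqueness of maps into a terminal object. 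The step I expect to require the most care is the terminality argument at the level of $\infty$-categorical mapping spaces rather than merely on objects: one must ensure that ``$\mathrm{const}_{*}$ is terminal in $[X,\mathcal{C}]$'' is being applied to the full mapping space of $\mathrm{Fam}(\mathcal{C})$ and not just to its set of components, which is where one leans on the explicit model of $\mathrm{Fam}(\mathcal{C})$ and the pullback description of limits underlying Lemma~\ref{lims}.
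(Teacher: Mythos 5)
Your proof is correct, but it takes a genuinely different route from the paper's. The paper never unwinds the definition of a morphism in $\mathrm{Fam}(\mathcal{C})$: it decomposes $(X,F)$ into its connected components $\coprod_{j\in\pi_{0}X}(X_{j},F_{j})$ (Lemma \ref{connect} and the remark following it), uses that $\mathrm{Hom}$ out of a connected object preserves coproducts to pull the $\coprod_{i\in I}$ out of the hom, and then invokes terminality of $\sigma(*)$ to collapse each summand to a point; the argument is formal and mirrors the classical $\pi_{0}\dashv\Delta$ proof for locally connected topoi, relying only on extensivity. You instead compute the mapping space directly from the explicit model: identify $\Delta(I)$ with $(I,\mathrm{const}_{*})$, observe that the fiber of $\mathrm{Map}((X,F),\Delta I)\to\mathrm{Map}(X,I)$ over each $\varphi$ is the contractible space $\mathrm{Nat}(F,\mathrm{const}_{*})$, and conclude. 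Your version is more elementary in that it avoids the connected decomposition entirely, and it actually proves slightly more: the equivalence $\mathrm{Map}_{\mathrm{Fam}(\mathcal{C})}((X,F),\Delta I)\simeq\mathrm{Map}_{\mathrm{Grpd}_{\infty}}(X,I)$ holds for arbitrary $(X,F)$, not just $0$-truncated ones, which is relevant to the question raised in \S 7 about whether $\Pi_{\infty}$ fits into an essential-geometric-morphism-style adjoint triple. The paper's version, by contrast, generalizes more readily to any extensive $\infty$-category where one has a supply of connected objects, without needing a concrete presentation of the hom-spaces. Your self-identified delicate point (terminality of $\mathrm{const}_{*}$ at the level of mapping spaces, via pointwise limits in $[X,\mathcal{C}]$) is handled correctly and is exactly where the care is needed.
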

\begin{proof}
Fix $(X,F)\in\mathrm{Fam}(\mathcal{C})$ and $I\in\mathrm{Set}$ and denote by $\coprod_{j\in\pi_{0} X}(X_{j},F_{j})$ the canonical coproduct decomposition of $(X,F)$ into connected objects (which exists by virtue of Lemma 3.15). Applying definitions, we have:
\begin{align}
\mathrm{Hom}_{\tau_{\leq 0}\mathrm{Fam}(\mathcal{C})}((X,F),\Delta I)\simeq\mathrm{Hom}_{\tau_{\leq 0}\mathrm{Fam}(\mathcal{C})}(\coprod_{j\in \pi_{0}X}(X_{j},F_{j}),\coprod_{i\in I}\sigma(*))
\end{align}
Since each $(X_{i},F_{i})$ is connected and representables send colimits to limits in their first argument:
\begin{align}
\mathrm{Hom}_{\tau_{\leq 0}\mathrm{Fam}(\mathcal{C})}(\coprod_{j\in \pi_{0}X}(X_{j},F_{j}),\coprod_{i\in I}\sigma(*))\simeq\prod_{j\in \pi_{0}X}\mathrm{Hom}_{\tau_{\leq 0}\mathrm{Fam}(\mathcal{C})}((X_{j},F_{j}),\coprod_{i\in I}\sigma(*))
\end{align}
\begin{align}
    \simeq\prod_{j\in \pi_{0}X}\coprod_{i\in I}\mathrm{Hom}_{\tau_{\leq 0}\mathrm{Fam}(\mathcal{C})}((X_{j},F_{j}),\sigma(*))
\end{align}
Since $\sigma$ is left-exact, $\sigma(*)$ is terminal in $\tau_{\leq 0}\mathrm{Fam}(\mathcal{C})$. So we have:
\begin{align}
\prod_{j\in \pi_{0}X}\coprod_{i\in I}\mathrm{Hom}_{\tau_{\leq 0}\mathrm{Fam}(\mathcal{C})}((X_{j},F_{j}),\sigma(*))\simeq\prod_{j\in \pi_{0}X}\coprod_{i\in I}*
\end{align}
\begin{align}
\simeq\prod_{j\in \pi_{0}X}\mathrm{Hom}_{\mathrm{Set}}(*,I)
\end{align}
\begin{align}
\simeq\mathrm{Hom}_{\mathrm{Set}}(\coprod_{j\in \pi_{0}X}*,I)
\end{align}
\begin{align}
\simeq\mathrm{Hom}_{\mathrm{Set}}(\Pi_{\infty}|_{\tau_{\leq 0}\mathrm{Fam}(\mathcal{C})}(X,F),I)
\end{align}
\end{proof}
We observe that Proposition \ref{adj} implies the following statement.
\begin{cor}
\textit{Let $\mathcal{C}$ be an $\infty$-category such that $\mathrm{Fam}(\mathcal{C})$ is an $\infty$-topos (e.g $\mathcal{C}=\mathrm{Grpd}^{*/}, \mathbf{Sp}$). Then the full subcategory $\tau_{\leq0}\mathrm{Fam}(\mathcal{C})$ of $\mathrm{Fam}(\mathcal{C})$ on $0$-truncated objects is a locally connected 1-topos}. 
\end{cor}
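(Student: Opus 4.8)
The plan is to assemble three ingredients: that the $0$-truncated objects of an $\infty$-topos form a Grothendieck $1$-topos, that the functor $\Delta$ appearing in Proposition \ref{adj} is \emph{the} constant-sheaf functor of the canonical geometric morphism to $\mathrm{Set}$, and that a Grothendieck $1$-topos whose constant-sheaf functor admits a left adjoint is by definition locally connected. Given these, the corollary is essentially immediate from Proposition \ref{adj}.

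First I would check the hypotheses and set up the $1$-topos. Since $\mathrm{Fam}(\mathcal{C})$ is an $\infty$-topos it has a terminal object, and by the explicit construction of limits in $\mathrm{Fam}(\mathcal{C})$ the functor $\Pi_{\infty}$ preserves limits; hence the terminal object has shape $*$ and is therefore of the form $\sigma(t)$ for some $t\in\mathcal{C}$. Full faithfulness of $\sigma$ then forces $t$ to be terminal in $\mathcal{C}$, so the standing hypothesis of Proposition \ref{adj} is satisfied (in the examples $\mathrm{Grpd}^{*/}$ and $\mathbf{Sp}$ this terminal object is the zero object). Next, because $\mathrm{Fam}(\mathcal{C})$ is an $\infty$-topos, its full subcategory $\tau_{\leq 0}\mathrm{Fam}(\mathcal{C})$ of $0$-truncated objects is a $1$-topos in the $\infty$-categorical sense, i.e.\ a Grothendieck $1$-topos (a standard fact; see e.g.\ [6, \S6.4]); concretely, by the characterization of $0$-truncated objects it is the category of set-indexed families of objects of $\mathcal{C}$, although we will not need this description.

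The key step is to identify $\Delta$ with the constant-sheaf functor $\gamma^{*}\colon\mathrm{Set}\to\tau_{\leq 0}\mathrm{Fam}(\mathcal{C})$ of the essentially unique geometric morphism $\gamma$ to $\mathrm{Set}$. Since $\sigma$ is left exact and $*$ is terminal in $\mathcal{C}$, the object $\sigma(*)$ is terminal in $\mathrm{Fam}(\mathcal{C})$; it is moreover $0$-truncated, its shape being the point, so it is the terminal object of $\tau_{\leq 0}\mathrm{Fam}(\mathcal{C})$. Thus $\Delta(I)=\coprod_{i\in I}\sigma(*)\simeq\coprod_{i\in I}1$. On the other hand $\gamma^{*}$, being left exact and cocontinuous, sends $I=\coprod_{i\in I}*$ to $\coprod_{i\in I}1$, so $\Delta\simeq\gamma^{*}$. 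Proposition \ref{adj} now exhibits $\Pi_{\infty}|_{\tau_{\leq 0}\mathrm{Fam}(\mathcal{C})}$ as a left adjoint to $\Delta\simeq\gamma^{*}$, which is exactly the statement that $\tau_{\leq 0}\mathrm{Fam}(\mathcal{C})$ is a locally connected $1$-topos.

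I do not expect a serious obstacle: the argument is mostly a matter of recognizing that the ad hoc adjunction of Proposition \ref{adj} is the one intrinsic to the $1$-topos structure. The only point demanding care is the identification $\Delta\simeq\gamma^{*}$ — specifically, verifying that $\sigma(*)$ really is the terminal object of $\tau_{\leq 0}\mathrm{Fam}(\mathcal{C})$ and invoking the coproduct formula for $\gamma^{*}$ valid in any Grothendieck topos — together with the preliminary check that $\mathcal{C}$ has a terminal object so that Proposition \ref{adj} applies at all.
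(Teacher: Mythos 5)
Your proof is correct and takes the same route as the paper, which offers no written proof beyond the remark that the corollary follows from Proposition \ref{adj}: you simply supply the details that remark leaves implicit, namely that $\mathcal{C}$ has a terminal object so the proposition applies, that the $0$-truncated objects of an $\infty$-topos form a Grothendieck $1$-topos, and that $\Delta$ is identified with the constant-sheaf functor, so that the adjunction $\Pi_{\infty}|_{\tau_{\leq 0}\mathrm{Fam}(\mathcal{C})}\dashv\Delta$ is exactly the assertion of local connectedness.
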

Let $X$ be a topological space. Recall that its fundamental group $\pi_{1}(X,x)$ at basepoint $x\in X$ can be obtained by the automorphism group $\mathrm{Aut}_{\Pi_{1}(X)}(x)$ of $x$ in its fundamental groupoid $\Pi_{1}(X)$, where $x$ is regarded as an object of $\Pi_{1}(X)$. In the rest of this section, we will describe a natural construction of the fundamental groups of parametrized families in an $\infty$-category based on this perspective.

\vspace{0mm}

\vspace{0mm}
\begin{defn}
Let $\mathcal{C}$ be an $\infty$-category. Define the \textit{fundamental groupoid} functor $\Pi_{1}:\mathrm{Fam}(\mathcal{C})\rightarrow\mathrm{Grpd}$ by $\pi_{1}=\tau_{\leq1}\circ\Pi_{\infty}$.
\end{defn}
\begin{prop}
Let $\mathcal{C}$ be an $\infty$-category. By abuse of notation, denote by $*$ both the terminal object of $\mathcal{C}$ and the terminal $(\infty$-)groupoid. Then $\Pi_{1}$ induces a functor
\begin{align}
\Pi_{1}^{+}:\mathrm{Fam}(\mathcal{C})^{*/}\rightarrow\mathrm{Grpd}^{*/}
\end{align}
on categories of pointed objects, i.e $\Pi_{1}$ takes pointed families in $\mathcal{C}$ to pointed groupoids. Furthermore, $\Pi_{1}^+$ preserves small colimits.
\end{prop}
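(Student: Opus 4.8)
The plan is to derive both assertions from two basic facts: that $\Pi_\infty\colon\mathrm{Fam}(\mathcal{C})\to\mathrm{Grpd}_\infty$ preserves the terminal object and all small colimits, and that $\tau_{\leq 1}$ does likewise. For the first claim, note that $\sigma$ is left-exact, so $\sigma(*)$ is a terminal object of $\mathrm{Fam}(\mathcal{C})$; by definition $\Pi_\infty(\sigma(*))=*$, and since $*$ is already $1$-truncated, $\Pi_1(\sigma(*))=\tau_{\leq 1}(*)=*$, the terminal object of $\mathrm{Grpd}$. Any functor that preserves the terminal object induces, by functoriality of the co-slice (undercategory) construction, a functor between the co-slices under it; applied to $\Pi_1$ this yields $\Pi_1^+\colon\mathrm{Fam}(\mathcal{C})^{*/}\to\mathrm{Grpd}^{*/}$, sending a family $(X,F)$ equipped with a point $\sigma(*)\to(X,F)$ to the pointed groupoid $\tau_{\leq 1}X$ with point $*\to\tau_{\leq 1}X$. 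This is precisely the statement that $\Pi_1$ carries pointed families to pointed groupoids.

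For colimit-preservation I would argue in three steps. First, $\Pi_\infty$ preserves all small colimits: this is exactly what the explicit construction of colimits in $\mathrm{Fam}(\mathcal{C})$ above records, the shape of $\varinjlim D$ being computed there as $\varinjlim(\Pi_\infty\circ D)$ (the colimit may be taken in $\mathrm{Cat}_\infty$, which is harmless since $\mathrm{Grpd}_\infty\hookrightarrow\mathrm{Cat}_\infty$ preserves colimits). Second, $\tau_{\leq 1}\colon\mathrm{Grpd}_\infty\to\mathrm{Grpd}$ is the reflector onto $1$-truncated objects, hence a left adjoint, hence preserves all small colimits; consequently $\Pi_1=\tau_{\leq 1}\circ\Pi_\infty$ preserves all small colimits as well as the terminal object. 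Third, I transport this to pointed objects. Given a small diagram $\bar D\colon K\to\mathrm{Fam}(\mathcal{C})^{*/}$ with underlying diagram $D\colon K\to\mathrm{Fam}(\mathcal{C})$, let $D^{\triangleleft}\colon K^{\triangleleft}\to\mathrm{Fam}(\mathcal{C})$ be the extension of $D$ sending the adjoined (cone) point to the terminal object $\sigma(*)$ via the basepoint maps of $\bar D$. The colimit of $\bar D$ in the co-slice $\mathrm{Fam}(\mathcal{C})^{*/}$ is $\varinjlim_{\mathrm{Fam}(\mathcal{C})}D^{\triangleleft}$ equipped with the structure map out of the cone point — the $\infty$-categorical form of the elementary fact that a colimit of objects under $c$ is computed by adjoining $c$ as an initial object of the diagram and taking the colimit downstairs. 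Since $\Pi_1$ preserves the colimit $\varinjlim D^{\triangleleft}$ and $\Pi_1(\sigma(*))=*$, we have $\Pi_1\circ D^{\triangleleft}=(\Pi_1\circ D)^{\triangleleft}$, whence $\Pi_1\bigl(\varinjlim_{\mathrm{Fam}(\mathcal{C})}D^{\triangleleft}\bigr)\simeq\varinjlim_{\mathrm{Grpd}}\bigl((\Pi_1\circ D)^{\triangleleft}\bigr)$ compatibly with the distinguished points; by the same co-slice description the right-hand side is the colimit of $\Pi_1^+\circ\bar D$ in $\mathrm{Grpd}^{*/}$. Hence $\Pi_1^+$ preserves all small colimits.

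The only genuine work lies in the third step: one must check that the cone-point structure map really does exhibit $\varinjlim_{\mathrm{Fam}(\mathcal{C})}D^{\triangleleft}$ as the colimit in the co-slice, and that the equivalence $\Pi_1(\varinjlim D^{\triangleleft})\simeq\varinjlim(\Pi_1\circ D^{\triangleleft})$ respects the cone-point maps, so that the two distinguished points match. All of this is formal once $\Pi_\infty$'s preservation of colimits is in hand, and nothing degenerates since $K^{\triangleleft}$ is always weakly contractible. Alternatively one can bypass the general co-slice formula by noting that every small colimit in a pointed category is a connected colimit in the underlying category — coproducts becoming wedge sums, i.e.\ pushouts over the basepoint — and that connected colimits in $\mathrm{Fam}(\mathcal{C})$ are preserved by $\Pi_1$. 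I would also point out that, omitting $\tau_{\leq 1}$ throughout, the same argument shows $\Pi_\infty^+\colon\mathrm{Fam}(\mathcal{C})^{*/}\to\mathrm{Grpd}_\infty^{*/}$ preserves small colimits.
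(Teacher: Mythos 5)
Your proposal is correct and follows essentially the same route as the paper: the first claim via preservation of terminal objects by $\Pi_{\infty}$ and $\tau_{\leq 1}$, and the second by combining $\Pi_{\infty}$'s colimit-preservation (read off from the explicit construction of colimits in $\mathrm{Fam}(\mathcal{C})$) with $\tau_{\leq 1}$ being a left adjoint, then transporting through the description of colimits in the co-slice. Your cone-point formulation $\varinjlim_{K^{\triangleleft}}D^{\triangleleft}$ is in fact a more careful rendering of the co-slice colimit than the paper's formula $\varinjlim(U\circ D)\coprod\ast\leftarrow\ast$, but the underlying argument is the same.
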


\begin{proof}
Fix a pointed object $*\xrightarrow{x}(X,F)$ in $\mathrm{Fam}(\mathcal{C})$. That $\Pi_{1}$ induces a functor on categories of pointed objects is immediate from the observation that both $\Pi_{\infty}$ and $\tau_{\leq 1}$ preserve terminal objects, hence $\Pi_{1}=\tau_{\leq 1}\circ\Pi_{\infty}$ induces a map $*\xrightarrow{\Pi_{1}(x)}\Pi_{1}(X,F)$.\\

From the construction of colimits in $\mathrm{Fam}(\mathcal{C})$, it is clear that $\Pi_{\infty}$ preserves them. Since $\tau_{\leq1}$ is a left adjoint, it also preserves colimits. By definition, for $K$ a small $\infty$-category and $D:K\rightarrow\mathrm{Fam}(\mathcal{C})^{*/}$ a diagram, we have $\varinjlim(D)=\varinjlim(U\circ D)\coprod*\leftarrow*$, where $U:\mathrm{Fam}(\mathcal{C})^{*/}\rightarrow\mathrm{Fam}(\mathcal{C})$ is the canonical projection. But since the composite $\Pi_{1}=\tau_{\leq 1}\circ\Pi_{\infty}$ preserves colimits and terminal objects, we have:
\begin{align}
\Pi_{1}(\varinjlim(D))=\Pi_{1}(\varinjlim(U\circ D)\coprod*\leftarrow*)
\end{align}
\begin{align}
\simeq\Pi_{1}(\varinjlim(U\circ D))\coprod*\leftarrow*
\end{align}
\begin{align}
\simeq\varinjlim(U\circ\Pi_{1}\circ D)\coprod*\leftarrow*
\end{align}
Coupling the fact that $\Pi_{1}$ evidently commutes with $U$ with the definition of colimits in $\mathrm{Fam}(\mathcal{C})^{*/}$, we get:
\begin{align}
((\varinjlim(U\circ\Pi_{1}\circ D)\coprod*\leftarrow*)\simeq\varinjlim(\Pi_{1}\circ D)
\end{align}
Thus the proposition follows.
\end{proof}

\begin{defn}\label{fun}
The \textit{fundamental group} $\pi_{1}((X,F),x)$ at $x\in(X,F)$ of a pointed family $*\xrightarrow{x}(X,F)\in\mathrm{Fam}(\mathcal{C})^{*/}$ is the automorphism group $\pi_{1}((X,F),x)=\mathrm{Aut}_{\tau_{\leq 1}(\Pi_{\infty}(X,F))}(x)$. This extends to a functor $\pi_{1}:\mathrm{Fam}(\mathcal{C})^{*/}\rightarrow \mathcal{G}\mathrm{rp}$. Equivalently, it is the based fundamental group of the pointed $\infty$-groupoid $(\Pi_{\infty}(X,F),x)$ regarded as a pointed topological space under the homotopy hypothesis.
\end{defn}
\begin{rem}
$\pi_{1}((X,F),x)$ can also be computed as the first based simplicial homotopy group $\pi_{1}(N(\Pi_{1}(X,F)),x)$ of the pointed Kan complex $N(\Pi_{1}(X,F)))$ at $x\in N(\Pi_{1}(X,F))_{0}$. 
\end{rem}
\vspace{0mm}
\begin{rem}
Via the equivalence $\mathrm{Fam}(*)\simeq\mathrm{Grpd}_{\infty}$, the $\pi_{1}$ construction of Definition \ref{fun} recovers the classical fundamental group of a topological space.  
\end{rem}
\begin{prop}
Let $\mathcal{C}$ be an $\infty$-category and fix a connected object $(X,F)\in\mathrm{Fam}(\mathcal{C})$. Let $(x_{0},\phi_{0}), (x_{1},\phi_{1}):*\rightarrow(X,F)$ be two basepoints in $(X,F)$. Then there is a canonical isomorphism of groups $\pi_{1}((X,F),x_{0})\xrightarrow{\simeq}\pi_{1}((X,F),x_{1})$.
\end{prop}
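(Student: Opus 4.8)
The statement is just the $\infty$-categorical (or rather, $1$-truncated) incarnation of the classical fact that in a connected groupoid all automorphism groups are isomorphic, together with the observation from Lemma~\ref{connect} that connectedness of $(X,F)$ in $\mathrm{Fam}(\mathcal{C})$ is exactly connectedness of the shape $X$ as an $\infty$-groupoid. So the first step is to reduce entirely to the shape: by Lemma~\ref{connect}, since $(X,F)$ is connected, $X$ is a connected $\infty$-groupoid; and by definition $\pi_1((X,F),x_i)=\mathrm{Aut}_{\tau_{\leq 1}\Pi_\infty(X,F)}(x_i)=\mathrm{Aut}_{\tau_{\leq 1}X}(x_i)$, where $\tau_{\leq 1}X=\Pi_1(X,F)$ is an honest (connected) groupoid. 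Here I should note that the basepoints $(x_i,\phi_i):*\to(X,F)$ induce points $x_i$ of $X$ — indeed a map $*\to(X,F)$ in $\mathrm{Fam}(\mathcal{C})$ is by definition a functor $*\to X$ (picking out $x_i\in X$) together with natural-transformation data, and applying $\Pi_\infty$ discards the latter, so $\Pi_\infty$ sends the two basepoints to two points $x_0,x_1$ of the connected groupoid $\Pi_1(X,F)$.

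\emph{The core argument.} It now suffices to prove: in a connected groupoid $G$, any two objects $x_0,x_1$ have canonically isomorphic automorphism groups. Since $G$ is connected, the hom-set $G(x_0,x_1)$ is nonempty; choose $\gamma\in G(x_0,x_1)$ (in a groupoid every morphism is invertible). Conjugation $c_\gamma\colon f\mapsto \gamma f\gamma^{-1}$ is a group homomorphism $\mathrm{Aut}_G(x_0)\to\mathrm{Aut}_G(x_1)$ with inverse $c_{\gamma^{-1}}$, hence an isomorphism. Transporting back through the identifications $\pi_1((X,F),x_i)\cong\mathrm{Aut}_{\Pi_1(X,F)}(x_i)$ gives the desired isomorphism $\pi_1((X,F),x_0)\xrightarrow{\simeq}\pi_1((X,F),x_1)$.

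\emph{The ``canonical'' clause.} The one genuinely non-formal point is the word \emph{canonical}: the conjugation isomorphism a priori depends on the chosen path $\gamma$, and two choices $\gamma,\gamma'$ differ by an element of $\mathrm{Aut}_G(x_0)$, so $c_\gamma$ and $c_{\gamma'}$ differ by an inner automorphism. Thus the isomorphism is canonical only \emph{up to inner automorphism} / as an outer isomorphism, or equivalently it is literally canonical once one remembers the full fundamental groupoid $\Pi_1(X,F)$ rather than just its automorphism groups — the canonical object is the connected groupoid itself, and $\pi_1$ at a basepoint is its ``skeleton at that object.'' I would phrase the conclusion in exactly this way: the isomorphism is canonical in the sense of being well-defined up to inner automorphisms, induced by any morphism $x_0\to x_1$ in $\Pi_1(X,F)$, and independent of such a choice at the level of $\mathrm{Out}$. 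I do not expect any real obstacle here beyond being careful about this last point; everything else is a direct unwinding of Definition~\ref{fun} and Lemma~\ref{connect}.
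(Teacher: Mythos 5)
Your proposal is correct and follows essentially the same route as the paper: reduce via Lemma~\ref{connect} to the connected $1$-truncated groupoid $\tau_{\leq 1}(X)$ and then invoke the standard fact that any two objects of a connected groupoid have isomorphic automorphism groups (the paper phrases this via the canonical equivalences $\mathbf{B}\mathrm{Aut}_{\tau_{\leq 1}(X)}(x_i(*))\xrightarrow{\simeq}\tau_{\leq 1}(X)$ and full faithfulness of $\mathbf{B}$, which amounts to your conjugation argument). Your closing remark that the isomorphism depends on the chosen path $\gamma$ and is canonical only up to inner automorphism is in fact a point the paper's proof glosses over, since inverting the equivalences there also requires such a choice.
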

\begin{proof}
By Lemma \ref{connect}, $X$ (and hence $\tau_{\leq 1}(X)$) is a connected $\infty$-groupoid. Since $\tau_{\leq 1}(X)$ is connected, there are canonical equivalences of groupoids
\begin{equation}
\mathbf{B}\mathrm{Aut}_{\tau_{\leq 1}(X)}(x_{0}(*))\xrightarrow{\simeq}\tau_{\leq1}(X)\xleftarrow{\simeq}\mathbf{B}\mathrm{Aut}_{\tau_{\leq 1}(X)}(x_{1}(*))
\end{equation}.

Since $\mathbf{B}\mathrm{Aut}_{\tau_{\leq 1}(X)}(x_{0}(*))$ and $\mathbf{B}\mathrm{Aut}_{\tau_{\leq 1}(X)}(x_{1}(*))$ are equivalent and have one object each, they must be isomorphic. The proposition follows from the fact that $\mathbf{B}$ is fully faithful and hence reflects isomorphisms.
\end{proof}
\section{A Grothendieck Topology on Families in an $\infty$-category}
Given an $\infty$-category $\mathcal{C}$, we can treat objects in $\mathrm{Fam}(\mathcal{C})$ as ``spaces" inside of $\mathcal{C}$. Thus, it is natural to ask for an appropriate notion of an open covering of a family $(X,F)\in\mathrm{Fam}(\mathcal{C})$. This can be accomplished by defining a \textit{Grothendieck topology} on $\mathrm{Fam}(\mathcal{C})$. In this section, we endow $\mathrm{Fam}(\mathcal{C})$ the structure of a site based on Carchedi's epimorphism topology on $\infty$-topoi (Example \ref{effe}). We start with a definition.

\begin{defn}
Let $I$ be a set and let $\mathbb{H}$ be an $\infty$-topos. A family of maps of $\{X_{i}\xrightarrow{f_{i}}X\}_{i\in I}$ in $\mathbb{H}$ is an \textit{effective epimorphic family} if the induced map $\coprod_{i\in I}f_{i}:\coprod_{i\in I}X_{i}\rightarrow X$ is an effective epimorphism.
\end{defn}
The following theorem asserts that a form of the epimorphism topology (See Example \ref{effe}) on $\mathrm{Grpd}_{\infty}$ holds in the context of families in an $\infty$-category.
\begin{thm}\label{gr}
Let $\mathcal{C}$ be an $\infty$-category with pullbacks and let $(X,F)$ be an object of $\mathrm{Fam}(\mathcal{C})$. Define a family of maps $\{(X_{i},F_{i})\xrightarrow{(f_{i},\varphi_{i})}(X,F)\}_{i\in I}$ with codomain $(X,F)$ to be a \textit{covering family} if the induced family 
\begin{align}
\{\Pi_{\infty}(X_{i},F_{i})\xrightarrow{\Pi_{\infty}(f_{i},\varphi_{i})}\Pi_{\infty}(X,F)\}_{i\in I}
\end{align}
\begin{align}
=\{X_{i}\xrightarrow{f_{i}}X\}_{i\in I}
\end{align}
is an effective epimorphic family of $\infty$-groupoids. These covering families define a Grothendieck topology on $\mathrm{Fam}(\mathcal{C})$.
\end{thm}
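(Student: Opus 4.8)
The plan is to verify the three axioms for a Grothendieck (pre)topology directly, transporting everything along the fundamental $\infty$-groupoid functor $\Pi_\infty$. The key structural facts to exploit are: (i) $\Pi_\infty:\mathrm{Fam}(\mathcal{C})\to\mathrm{Grpd}_\infty$ sends the coproduct $\coprod_i (X_i,F_i)$ to $\coprod_i X_i$ (this appeared in the proof of Lemma~\ref{connect}); (ii) $\Pi_\infty$ preserves pullbacks, which follows from the explicit construction of limits in $\mathrm{Fam}(\mathcal{C})$ given before Lemma~\ref{lims} (the shape of a limit is computed as the limit of the shapes); and (iii) on $\mathrm{Grpd}_\infty$, a family $\{Y_i\to Y\}$ is effective-epimorphic iff $\coprod_i \pi_0(Y_i)\to\pi_0(Y)$ is surjective, by Example~\ref{luu}. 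Combining (i) and (iii), a family $\{(X_i,F_i)\to(X,F)\}$ is a covering family in the proposed sense precisely when $\coprod_i\pi_0(X_i)\to\pi_0(X)$ is surjective. This ``surjectivity on $\pi_0$'' reformulation is what makes all three axioms transparent.

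First I would check the isomorphism axiom: if $(X',F')\xrightarrow{\simeq}(X,F)$ is an equivalence in $\mathrm{Fam}(\mathcal{C})$, then in particular $\Pi_\infty$ of it is an equivalence $X'\xrightarrow{\simeq}X$ of $\infty$-groupoids, so $\pi_0(X')\to\pi_0(X)$ is a bijection, hence surjective; thus the singleton $\{(X',F')\xrightarrow{\simeq}(X,F)\}$ is a covering family. Next, the stability-under-pullback axiom: given a covering family $\{(X_i,F_i)\xrightarrow{(f_i,\varphi_i)}(X,F)\}_{i\in I}$ and an arbitrary map $(g,\psi):(Y,G)\to(X,F)$, I first note that pullbacks $(X_i,F_i)\times_{(X,F)}(Y,G)$ exist in $\mathrm{Fam}(\mathcal{C})$ because $\mathcal{C}$ has pullbacks (Lemma~\ref{lims}). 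By fact (ii), $\Pi_\infty$ of this pullback is $X_i\times_X Y$, and the projection to $\Pi_\infty(Y,G)=Y$ is the base change of $f_i$ along $g$. So I must show $\{X_i\times_X Y\to Y\}_{i\in I}$ is effective-epimorphic in $\mathrm{Grpd}_\infty$ whenever $\{X_i\to X\}_{i\in I}$ is. Using $\coprod_i(X_i\times_X Y)\simeq(\coprod_i X_i)\times_X Y$ (since pullback commutes with coproducts in the extensive $\infty$-category $\mathrm{Grpd}_\infty$, or directly since $\coprod$ is a left adjoint and an $\infty$-topos is extensive), this reduces to: the base change of an effective epimorphism of $\infty$-groupoids along any map is again an effective epimorphism. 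This is standard in an $\infty$-topos (effective epimorphisms are stable under base change, cf.\ Lurie), or one can verify it at the level of $\pi_0$ using Example~\ref{luu} plus the fact that $\pi_0$ of a pullback of $\infty$-groupoids surjects onto the pullback of the $\pi_0$'s.

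For the transitivity/composition axiom, suppose $\{(X_i,F_i)\xrightarrow{(f_i,\varphi_i)}(X,F)\}_{i\in I}$ is a covering family and each $(X_i,F_i)$ carries a covering family $\{(X_{ij},F_{ij})\xrightarrow{(f_{ij},\varphi_{ij})}(X_i,F_i)\}_{j\in J_i}$. Applying $\Pi_\infty$, I get that $\coprod_i\pi_0(X_i)\to\pi_0(X)$ is surjective and each $\coprod_j\pi_0(X_{ij})\to\pi_0(X_i)$ is surjective; composing, $\coprod_{i,j}\pi_0(X_{ij})\to\pi_0(X)$ factors as a surjection followed by a surjection and so is surjective, which by the $\pi_0$-reformulation says exactly that $\{(X_{ij},F_{ij})\to(X,F)\}_{i,j}$ is a covering family. (Alternatively, argue at the level of effective epimorphisms: effective epimorphisms of $\infty$-groupoids are closed under composition, and $\coprod_{i,j}X_{ij}\to\coprod_i X_i\to X$ realizes the composite.) I would also remark that $\Pi_\infty$ preserves the relevant coproducts ``on the nose'' so there is no coherence subtlety in forming $\coprod_{i,j}X_{ij}$.

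The main obstacle I anticipate is the pullback axiom, specifically making rigorous that $\Pi_\infty$ preserves pullbacks and that effective epimorphic families in $\mathrm{Grpd}_\infty$ are stable under base change in the form stated (as families, not just single maps). The first half is really a matter of carefully citing the limit construction in $\mathrm{Fam}(\mathcal{C})$ established earlier; the second half needs either a direct appeal to the fact that $\mathrm{Grpd}_\infty$ is an $\infty$-topos (where effective epis form the left class of a factorization system and are thus base-change stable) or an honest $\pi_0$-level computation, for which the only subtle point is that the canonical map $\pi_0(A\times_C B)\to\pi_0(A)\times_{\pi_0(C)}\pi_0(B)$ is surjective (it need not be injective, but surjectivity is all one needs, and it follows since any compatible pair of components is realized by points that become homotopic in $C$, hence lift to a point of the homotopy pullback). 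Everything else is bookkeeping with the dictionary ``covering family $\iff$ surjective on $\pi_0$ of shapes.''
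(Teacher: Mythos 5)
Your proposal is correct and follows essentially the same route as the paper: verify the three pretopology axioms, use Lemma~\ref{lims} to get pullbacks in $\mathrm{Fam}(\mathcal{C})$, note that $\Pi_{\infty}$ preserves the relevant (co)limits, commute the coproduct past the pullback by universality of coproducts in $\mathrm{Grpd}_{\infty}$, and invoke base-change stability and closure under composition of effective epimorphisms. The additional $\pi_{0}$-surjectivity dictionary you offer via Example~\ref{luu} is a valid and transparent supplement, but not a different argument in substance.
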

\begin{proof}
Clearly, any equivalence $\{(\gamma,\gamma_{\star}):(X',F')\xrightarrow{\simeq}(X,F)\}$ is a covering since it must hold that the underlying map of $\infty$-groupoids $\{\gamma:X'\rightarrow X\}$ is also an equivalence. Coupling Lemma \ref{lims} with the assumption that $\mathcal{C}$ has pullbacks implies that for any covering family $\{(X_{i},F_{i})\xrightarrow{(f_{i},\varphi_{i})}(X,F)\}_{i\in I}$ and a map $(g,\phi):(X',F')\rightarrow(X,F)$, there exists a pullback square:
\begin{displaymath}\xymatrixcolsep{1.7cm}
    \xymatrix{
        (\coprod_{i\in I}(X_{i},F_{i}))\times_{(X,F)}(X',F') \ar[r]\pushoutcorner  \ar[d] & (X',F')\ar[d]^{(g,\phi)} \\
        \coprod_{i\in I}(X_{i},F_{i}) \ar[r]_{\coprod_{i\in I}(f_{i},\varphi_{i})}       & (X,F) }
\end{displaymath}
in $\mathrm{Fam}(\mathcal{C})$ for each $i\in I$. We claim that $\{(X_{i},F_{i})\times_{(X,F)}(X',F')\rightarrow(X',F')\}_{i\in I}$ is a covering family, i.e that the induced map $\coprod_{i\in I}(X_{i}\times_{X}X')\rightarrow X'$ is an effective epimorphism. $\Pi_{\infty}$ evidently preserves small (co)limits, so applying $\Pi_{\infty}$ induces a pullback square of the underlying $\infty$-groupoids:
\begin{displaymath}\xymatrixcolsep{1.5cm}
    \xymatrix{
        (\coprod_{i\in I}X_{i})\times_{X}X' \ar[r]\pushoutcorner  \ar[d] & X'\ar[d]^{g} \\
        \coprod_{i\in I}X_{i} \ar[r]_{\coprod_{i\in i}f_{i}}       & X }
\end{displaymath}
for each $i\in I$. By assumption, the bottom horizontal map is an effective epimorphism, so the top horizontal map is also an effective epimorphism by [6, Proposition 6.2.3.15]. The claim then follows from the fact that coproducts of $\infty$-groupoids are universal, so that $(\coprod_{i\in I}X_{i})\times_{X}X'\simeq\coprod_{i\in I}(X_{i}\times_{X}X'\rightarrow X')$ is an effective epimorphism. That covering families are stable under composition are stable under composition is clear from [6, Corollary 7.2.1.12], so we are done.
\end{proof}
We will refer to the Grothendieck topology of Theorem \ref{gr} as the \textit{effective topology} and denote the associated $\infty$-site as $(\mathrm{Fam}(\mathcal{C}),E)$. Denote the epimorphism topology [3, Definition 2.2.5] on $\mathrm{Grpd}_{\infty}$ by $(\mathrm{Grpd}_{\infty}, Epi)$. By construction, $\Pi_{\infty}$ yields a morphism of sites:
\begin{equation}
\Pi_{\infty}:(\mathrm{Fam}(\mathcal{C}),E)\rightarrow(\mathrm{Grpd}_{\infty},Epi)
\end{equation}
\begin{rem}
The effective topology on $\mathrm{Fam}(*)\simeq\mathrm{Grpd}_{\infty}$ is precisely the epimorphism topology.
\end{rem}

\begin{prop}
Let $\mathcal{C}$ be an $\infty$-category with pullbacks such that $\mathrm{Fam}(\mathcal{C})$ is an $\infty$-topos (e.g $\mathcal{C}=\mathbf{Sp}$). Then the effective topology on $\mathrm{Fam}(\mathcal{C})$ contains the epimorphism topology, i.e every covering family in the epimorphism topology is also an effective covering.
\end{prop}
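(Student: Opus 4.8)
The plan is to reduce the statement to the behaviour of the fundamental $\infty$-groupoid functor on effective epimorphisms. First I would note that both topologies in question are generated by pretopologies: the epimorphism topology on $\mathrm{Fam}(\mathcal{C})$ exists precisely because $\mathrm{Fam}(\mathcal{C})$ is assumed to be an $\infty$-topos (see Example \ref{effe}), and the effective topology is the one constructed in Theorem \ref{gr}. Hence it suffices to check that every generating covering family of the epimorphism topology is a covering family for the effective topology. Concretely, let $\{(X_{i},F_{i})\xrightarrow{(f_{i},\varphi_{i})}(X,F)\}_{i\in I}$ be a covering family for the epimorphism topology, i.e. the induced map $\coprod_{i\in I}(X_{i},F_{i})\rightarrow(X,F)$ is an effective epimorphism in $\mathrm{Fam}(\mathcal{C})$.

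The key step is to apply $\Pi_{\infty}$. By Lemma \ref{lims}, $\mathrm{Fam}(\mathcal{C})$ has pullbacks since $\mathcal{C}$ does, and $\mathrm{Grpd}_{\infty}$ has pullbacks as well; moreover $\Pi_{\infty}$ is cocontinuous, as is visible from the construction of colimits in $\mathrm{Fam}(\mathcal{C})$. Therefore Lemma \ref{colim} applies and shows that $\Pi_{\infty}$ carries the effective epimorphism $\coprod_{i\in I}(X_{i},F_{i})\rightarrow(X,F)$ to an effective epimorphism in $\mathrm{Grpd}_{\infty}$. Because $\Pi_{\infty}$ preserves coproducts, this effective epimorphism is precisely $\coprod_{i\in I}X_{i}\rightarrow X$. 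Thus $\{X_{i}\xrightarrow{f_{i}}X\}_{i\in I}$ is an effective epimorphic family of $\infty$-groupoids, which is exactly the defining condition in Theorem \ref{gr} for $\{(X_{i},F_{i})\xrightarrow{(f_{i},\varphi_{i})}(X,F)\}_{i\in I}$ to be a covering family in the effective topology. Since every generating covering family of the epimorphism topology is thereby an effective covering, every epimorphism-covering sieve is an effective-covering sieve, so the effective topology contains the epimorphism topology.

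The main obstacle is bookkeeping rather than mathematics: one must be careful that ``containment of Grothendieck topologies'' genuinely reduces to comparing the generating pretopologies (so that it is enough to inspect generating families), and one must confirm that $\Pi_{\infty}$ satisfies the hypotheses of Lemma \ref{colim} — both of which follow directly from the material of \S3 and \S4. A secondary point worth making explicit is that the hypothesis that $\mathrm{Fam}(\mathcal{C})$ be an $\infty$-topos is used only to give meaning to the epimorphism topology in the first place; once a family has been shown to become effective-epimorphic after applying $\Pi_{\infty}$, no further toposic input is needed, and in particular the inclusion of topologies is generally proper, since the effective topology only constrains the shapes $X_{i}\to X$ and ignores the arrows $\varphi_{i}$.
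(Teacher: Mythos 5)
Your argument is correct and is essentially identical to the paper's own proof: both reduce the claim to applying $\Pi_{\infty}$ to the effective epimorphism $\coprod_{i\in I}(X_{i},F_{i})\rightarrow(X,F)$ and invoking Lemma \ref{colim} together with the cocontinuity of $\Pi_{\infty}$. The extra bookkeeping you supply about generating pretopologies is a reasonable elaboration but does not change the substance of the argument.
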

\begin{proof}
We need to show that if $\{(f_{i},\varphi_{i}):(X_{i},F_{i})\rightarrow(X,F)\}_{i\in I}$ is an effective epimorphic family, then the induced family of maps of $\infty$-groupoids $\{f_{i}:X_{i}\rightarrow X\}_{i\in I}$ under $\Pi_{\infty}$ is also effective epimorphic. This follows from coupling Lemma \ref{colim} with the fact that $\Pi_{\infty}$ preserves small colimits.
\end{proof}

\section{Appendix A: (Higher) Loci and Miscellaneous Results}
The fundamental group of a parametrized family (definition \ref{fun}) in many ways looks like the geometric homotopy groups of objects an $\infty$-topos. In fact, there are cases of $\infty$-categories $\mathcal{C}$ such that $\mathrm{Fam}(\mathcal{C})$ is a topos. The restriction of our definition of the fundamental $\infty$-groupoids/groups of parametrized families to families of objects in such categories thus behaves similarly to notions of homotopical invariants of objects in higher topoi. In this section, we discuss such categories in both the ordinary and higher categorical case and state some of our results relevant to the topic.
\begin{defn}[Joyal\footnote{Joyal also requires loci to be pointed, i.e to have a zero object.}]
A \textit{locus} is a locally presentable category $\mathcal{C}$ such that $\mathrm{Fam}(\mathcal{C})$ is a topos. 
\end{defn}
In [5], Joyal notes the following:
\begin{obs}\label{j}
\textit{The category $\mathrm{Set}^{*/}$ of pointed sets is a locus}.
\begin{notn}
For convenience, we will write a family in $\mathcal{C}$ indexed over a set $I$ as $\langle X_{i}\rangle_{i\in I}$, where each $X_{i}\in\mathcal{C}$.
\end{notn}
\end{obs}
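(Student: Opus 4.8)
The plan is to verify directly the two conditions packaged into the definition of a locus: that $\mathrm{Set}^{*/}$ is locally presentable, and that its coproduct completion $\mathrm{Fam}(\mathrm{Set}^{*/})$ is a Grothendieck topos. The first condition is routine and I would dispatch it in a sentence: $\mathrm{Set}^{*/}$ is the coslice $*/\mathrm{Set}$ of the locally presentable category $\mathrm{Set}$ under its terminal object (equivalently, the category of algebras for the finitary monad $(-)\sqcup *$ on $\mathrm{Set}$), and coslices of locally presentable categories — like categories of algebras for finitary monads — are again locally presentable. The real work is the second condition.

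For the second condition, the idea is to realize $\mathrm{Fam}(\mathrm{Set}^{*/})$ explicitly as a functor category, which is automatically a (presheaf) topos. Since $\mathrm{Set}^{*/}$ is an ordinary category, $\mathrm{Fam}(\mathrm{Set}^{*/})$ is its coproduct completion: an object is a set $I$ together with a pointed set $(X_{i},*_{i})$ for each $i\in I$. I would encode such a family by its \emph{total space}: set $X=\coprod_{i\in I}X_{i}$, let $p:X\to I$ be the projection, and let $s:I\to X$ pick out the basepoints $s(i)=*_{i}$, so that $p\circ s=\mathrm{id}_{I}$. Conversely, any pair of functions $p,s$ with $p\circ s=\mathrm{id}$ recovers a family of pointed sets by taking the fibers $X_{i}=p^{-1}(i)$ pointed at $s(i)$ (each fiber being nonempty precisely because $s$ is a section). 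Unwinding the definition of a morphism in $\mathrm{Fam}$, a map of families is exactly a commuting pair $(f:X\to Y,\,g:I\to J)$ with $q\circ f=g\circ p$ and $f\circ s=t\circ g$, where $(q,t)$ is the encoding of the target.

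This data is precisely a functor out of the small category $\mathcal{R}$ — the \emph{walking retraction} — with two objects $0,1$ and generating morphisms $s:0\to 1$, $p:1\to 0$ subject to the single relation $p\circ s=\mathrm{id}_{0}$ (so that $e:=s\circ p$ is an idempotent endomorphism of $1$). Indeed, a functor $\mathcal{R}\to\mathrm{Set}$ is exactly a retraction pair $(p,s)$ as above, and a natural transformation between two such functors is exactly a morphism of the corresponding families, with the naturality square at $s$ encoding basepoint preservation and that at $p$ encoding compatibility over the index. The total-space construction therefore assembles into an equivalence $\mathrm{Fam}(\mathrm{Set}^{*/})\simeq[\mathcal{R},\mathrm{Set}]$. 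Since $\mathcal{R}$ is a small (indeed finite) category, $[\mathcal{R},\mathrm{Set}]$ is the category of presheaves on $\mathcal{R}^{op}$, hence a Grothendieck topos, completing the verification and showing $\mathrm{Set}^{*/}$ is a locus.

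The main obstacle — really the only nontrivial step — is establishing the equivalence $\mathrm{Fam}(\mathrm{Set}^{*/})\simeq[\mathcal{R},\mathrm{Set}]$. Getting the indexing category $\mathcal{R}$ right is the crux: it must be the idempotent-splitting/retraction category, \emph{not} merely the arrow category $[1]$, because the chosen basepoint section $s$ is genuine extra structure that forces the relation $p\circ s=\mathrm{id}$. After that, checking that the total-space functor is essentially surjective and fully faithful — in particular that natural transformations correspond to $\mathrm{Fam}$-morphisms on the nose, with basepoint preservation recovered from naturality at $s$ — is straightforward bookkeeping, and local presentability of $\mathrm{Set}^{*/}$ is standard.
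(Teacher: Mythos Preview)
Your proposal is correct and follows essentially the same route as the paper: both dispatch local presentability via the coslice argument, and both establish the equivalence $\mathrm{Fam}(\mathrm{Set}^{*/})\simeq[\mathcal{R},\mathrm{Set}]$ with the walking retraction (the paper calls it $\widehat{\Delta[1]}$, with your $p$ written as $r$), using the fiber/total-space correspondence in each direction. The constructions you sketch are the same as the paper's functors $F$ and $G$.
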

\begin{proof}[Proof of Observation \ref{j}]\label{walk}
It is clear that $\mathrm{Set}^{*/}$ is locally presentable, since co-slice categories of locally presentable categories are themselves locally presentable. It remains to show that $\mathrm{Fam}(\mathrm{Set}^{*/})$ is a topos.\\
Let $\widehat{\Delta[1]}$ denote the ``walking-arrow-equipped-with-a-section," i.e the free category on the directed graph 
\xymatrix{
[0]\ar@/^.5pc/[r]^{s} & [1]\ar@/^.5pc/[l]^{r}
} subject to the condition $r\circ s=\mathrm{id}_{[0]}$. We claim that there is an equivalence of categories $\mathrm{Fam}(\mathrm{Set}^{*/})\simeq[\widehat{\Delta[1]},\mathrm{Set}]$. We construct a functor\footnote{This part of the equivalence is constructed in \cite{joy}.} $F:[\widehat{\Delta[1]},\mathrm{Set}]\rightarrow\mathrm{Fam}(\mathrm{Set}^{*/})$ as follows. Fix an object $X\in[\widehat{\Delta[1]},\mathrm{Set}]$. For every $p\in X[0]$, the fiber $X[r]^{-1}(p)$ is canonically pointed by $X[s](p)$ (this is guaranteed to land in $X[r]^{-1}(p)$ because of the condition $X(r)\circ X(s)=\mathrm{id}_{X[0]}$). Now we define: 
\begin{equation}
F:X\mapsto\langle (X[r]^{-1}(p),X[s](p))\rangle_{p\in X[0]}
\end{equation}

 We now construct a pseudo-inverse $G:\mathrm{Fam}(\mathrm{Set}^{*/})\rightarrow[\widehat{\Delta[1]},\mathrm{Set}]$. Let $U:\mathrm{Set}^{*/}\rightarrow\mathrm{Set}$ denote the canonical projection and fix $\langle X_{i}\rangle_{i\in I}\in\mathrm{Fam}(\mathrm{Set}^{*/})$. Define the function $\gamma_{1}:\coprod_{i\in I}U(X_{i})\rightarrow I$ that sends all $x\in X_{i}$ to $i\in I$. There is a canonical function $\gamma_{2}:I\rightarrow\coprod_{i\in I}U(X_{i})$ that sends $i\in I$ to the basepoint of $X_{i}$. Clearly, $\gamma_{1}\circ\gamma_{2}=\mathrm{id}_{I}$, so we can define the functor
\begin{displaymath}
G:\langle X_{i}\rangle_{i\in I}\mapsto\Bigg[
\xymatrix{
G(\langle X_{i}\rangle_{i\in I})[0]=I\ar@/^1.2pc/[r]^{G(\langle X_{i}\rangle_{i\in I})[s]=\gamma_{2}} & G(\langle X_{i}\rangle_{i\in I})[1]=\coprod_{i\in I}U(X_{i})\ar@/^1.2pc/[l]^{G(\langle X_{i}\rangle_{i\in I})[r]=\gamma_{1}}
}\Bigg]
\end{displaymath}
It is straightforward to verify that $G\circ F$ (resp. $F\circ G$) is naturally isomorphic to $\mathrm{id}_{[\widehat{\Delta[1]},\mathrm{Set}]}$ (resp. $\mathrm{id}_{\mathrm{Fam}(\mathrm{Set}^{*/})}$), so the claim is proven. Since $\mathrm{Fam}(\mathrm{Set}^{*/})$ is equivalent to a category of diagrams in $\mathrm{Set}$, it is a topos.
\end{proof}
\begin{cor}\label{cot}
\textit{$[\widehat{\Delta[1]},\mathrm{Set}]$ is a locally connected topos. Furthermore, for any precosheaf $X\in[\widehat{\Delta[1]},\mathrm{Set}]$, there is an isomorphism}
\begin{equation}
\pi_{0}(X)\simeq X[0]
\end{equation}

\end{cor}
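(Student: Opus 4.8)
The plan is to derive Corollary \ref{cot} directly from the equivalence $\mathrm{Fam}(\mathrm{Set}^{*/})\simeq[\widehat{\Delta[1]},\mathrm{Set}]$ established in the proof of Observation \ref{j}, combined with the general results about $\mathrm{Fam}(-)$ proved in \S3--\S4. First I would observe that local connectedness is essentially free: by Proposition \ref{adj} (or rather its corollary), the $0$-truncated part of $\mathrm{Fam}(\mathcal{C})$ is a locally connected $1$-topos whenever $\mathrm{Fam}(\mathcal{C})$ is a topos, and since $\mathrm{Set}^{*/}$ is $0$-truncated (it is an ordinary category), $\mathrm{Fam}(\mathrm{Set}^{*/})$ coincides with its own $0$-truncation. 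Transporting along the equivalence of Observation \ref{j} gives that $[\widehat{\Delta[1]},\mathrm{Set}]$ is a locally connected topos. Alternatively, and more elementarily, one can note that $[\widehat{\Delta[1]},\mathrm{Set}]$ is a presheaf topos on a category with a terminal object (or simply invoke that it is $\mathrm{Fam}$ of a $1$-category, hence a coproduct completion, hence the connected-components functor has the requisite adjoint $\Delta$ by Proposition \ref{adj}).

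Next I would identify the $\pi_0$ functor under the equivalence. Here $\pi_0 = \Pi_\infty|_{\tau_{\leq 0}}$ is, by the remark following the definition of $\Pi_\infty$, the classical family fibration $\mathrm{Fam}(\mathrm{Set}^{*/})\to\mathrm{Set}$ sending $\langle X_i\rangle_{i\in I}\mapsto I$. So it remains to trace $I$ through the pseudo-inverse $G$ of the proof of Observation \ref{j}: by construction $G(\langle X_i\rangle_{i\in I})[0] = I$. Hence for $X\in[\widehat{\Delta[1]},\mathrm{Set}]$ with $X \simeq G(F(X))$, we get $\pi_0(X) = \pi_0(F(X)) = (\text{index set of } F(X)) = X[0]$, where the middle equality is exactly the description of $\pi_0$ on $\mathrm{Fam}$, and the last uses $F(X) = \langle (X[r]^{-1}(p), X[s](p))\rangle_{p\in X[0]}$, whose index set is $X[0]$. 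This chain of identifications gives the claimed isomorphism $\pi_0(X)\simeq X[0]$, naturally in $X$.

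I would then remark, to make the statement self-contained, why $\pi_0$ on $[\widehat{\Delta[1]},\mathrm{Set}]$ "is" the functor $X\mapsto X[0]$ in topos-theoretic terms: $[0]$ is the terminal object of $\widehat{\Delta[1]}$ (every object maps uniquely to $[0]$ via the retraction structure — more precisely, $\widehat{\Delta[1]}$ has $[0]$ as a terminal object since $r:[1]\to[0]$ is the unique map and $\mathrm{id}:[0]\to[0]$), so evaluation at $[0]$ is the global sections functor, and for a presheaf topos on a category with terminal object the global sections functor has a further left adjoint — the connected components functor — given precisely by this evaluation composed appropriately; this is consistent with the abstract $\pi_0$. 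I would be careful here: $[0]$ being terminal in $\widehat{\Delta[1]}$ is what makes $[\widehat{\Delta[1]},\mathrm{Set}]$ locally connected in the standard way, and the agreement of the abstract $\pi_0$ with $X\mapsto X[0]$ is then forced by uniqueness of left adjoints.

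The main obstacle I anticipate is purely bookkeeping: making sure that the abstract connected-components functor $\pi_0$ (defined as $\Pi_\infty|_{\tau_{\leq 0}}$, left adjoint to $\Delta$) really does match up, under the \emph{particular} equivalence $F, G$ constructed by hand in Observation \ref{j}, with the concrete functor $X\mapsto X[0]$ — rather than with some twisted version. This is not deep, since left adjoints to a fixed functor are unique up to unique isomorphism, but one must check that $\Delta:\mathrm{Set}\to[\widehat{\Delta[1]},\mathrm{Set}]$ transported via $F$ is indeed "right adjoint to evaluation at $[0]$", equivalently that $\Delta(S)$ corresponds to the constant-ish presheaf $G(\coprod_{s\in S}\sigma(*))$ whose value at $[0]$ is $S$; once that is verified the isomorphism $\pi_0(X)\simeq X[0]$ drops out by adjoint uniqueness, and naturality is automatic. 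So the whole proof is short: invoke Proposition \ref{adj}/Lemma \ref{connect} for local connectedness, transport along Observation \ref{j}, and read off $\pi_0$ from the formula for $G$.
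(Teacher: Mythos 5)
Your proposal is correct and follows exactly the route the paper intends: the paper gives no separate proof of Corollary \ref{cot}, deriving it implicitly from the equivalence $F,G$ of Observation \ref{j} together with Proposition \ref{adj} and its corollary, and reading off $\pi_{0}(\langle X_i\rangle_{i\in I})=I=G(\langle X_i\rangle_{i\in I})[0]$ just as you do. Your additional observation that $[0]$ is terminal in $\widehat{\Delta[1]}$ (so that $\pi_0=\colim$ is evaluation at $[0]$) is a valid independent confirmation, but not a different argument in substance.
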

\begin{rem}
A similar argument can be used to show that the coproduct completion of the category of pointed objects in a topos $\mathbb{E}$ is itself a topos. This is clear once we note that the condition $r\circ s=\mathrm{id}_{[0]}$ implies that for every object $X\in[\widehat{\Delta[1]},\mathbb{E}]$ and any global element $p:*\rightarrow X[0]$, we get a commutative diagram in $\mathbb{E}$:\\
\begin{displaymath}
\xymatrixcolsep{1.5cm}
    \xymatrix{
    \ast\ar@/^.7pc/[drr]^{\mathrm{id}}\ar@/_.7pc/[ddr]_{X[s]\circ p}\ar@{.>}[dr] & &\\
   &  X[1]\times_{X[0]}\ast\ar[r]\ar[d] & \ast\ar[d]^{p} \\
        &   X[1]\ar[r]^{X[r]}   & X[0] }
\end{displaymath}\\

in which the square is a pullback and the global element $*\rightarrow X[1]\times_{X[0]}\ast$ is induced by universal property, i.e the fibered product $X[1]\times_{X[0]}\ast$ is canonically pointed. By repeating this construction for each $p\in\mathrm{Hom}_{\mathbb{E}}(*,X[0])$, we can build a family of pointed objects in $\mathbb{E}$ by taking fibers. This assignment is functorial and yields an equivalence of categories

\begin{equation}
[\widehat{\Delta[1]},\mathbb{E}]\xrightarrow{\simeq}\mathrm{Fam}(\mathbb{E}^{*/})
\end{equation}

Due to this equivalence, we can slightly generalize Corollary \ref{cot} to obtain the following:
\end{rem}
\begin{cor}
\textit{Let $\mathbb{E}$ be a topos with terminal object $*$. Then $[\widehat{\Delta[1]},\mathbb{E}]$ is a locally connected topos. Additionally for any object $X\in[\widehat{\Delta[1]},\mathbb{E}]$, one has an isomorphism:}
\begin{equation}
\pi_{0}(X)\simeq\mathrm{Hom}_{\mathbb{E}}(*,X[0])\\
\end{equation}
\end{cor}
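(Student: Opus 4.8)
The plan is to transport the statement across the equivalence $[\widehat{\Delta[1]},\mathbb{E}]\xrightarrow{\simeq}\mathrm{Fam}(\mathbb{E}^{*/})$ recorded in the preceding remark, and then to harvest both assertions from Proposition \ref{adj} together with general topos theory. First I would note the two elementary facts about $\mathcal{C}:=\mathbb{E}^{*/}$ needed to run the machinery of the previous sections: it is locally presentable, being a co-slice of a locally presentable category, and it has a terminal object, namely $\mathrm{id}_{*}\colon *\to *$, since every pointed object $*\to A$ maps uniquely to it over $*$. Since $\mathbb{E}$ is an ordinary $1$-topos, $\mathcal{C}$ is a $1$-category, so every object of $\mathrm{Fam}(\mathcal{C})$ is a family indexed by a discrete set; by the Proposition characterizing $0$-truncated objects of $\mathrm{Fam}(\mathcal{C})$ this means $\tau_{\leq 0}\mathrm{Fam}(\mathcal{C})=\mathrm{Fam}(\mathcal{C})$ and, on this category, $\pi_{0}$ coincides with $\Pi_{\infty}$.

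For local connectedness I would apply Proposition \ref{adj} with $\mathcal{C}=\mathbb{E}^{*/}$: the functor $\Delta\colon I\mapsto\coprod_{i\in I}\sigma(*)$ has $\pi_{0}=\Pi_{\infty}$ as a left adjoint. The point then is to recognize $\Delta$ as the constant-object functor of the Grothendieck topos $\mathrm{Fam}(\mathbb{E}^{*/})\simeq[\widehat{\Delta[1]},\mathbb{E}]$: because $\sigma$ is left-exact, $\sigma(*)$ is terminal in $\mathrm{Fam}(\mathcal{C})$, so $\Delta(I)=\coprod_{i\in I}\sigma(*)$ is exactly the $I$-indexed copower of the terminal object, i.e.\ the constant sheaf on $I$. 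Being a Grothendieck topos, $[\widehat{\Delta[1]},\mathbb{E}]$ has its global-sections functor $\Gamma=\mathrm{Hom}(*,-)$ right adjoint to $\Delta$, so $\Delta$ fits into an adjoint triple $\pi_{0}\dashv\Delta\dashv\Gamma$; in particular the constant-sheaf functor admits a left adjoint, which is precisely what it means for the topos to be locally connected. That $[\widehat{\Delta[1]},\mathbb{E}]$ is a Grothendieck topos at all follows from the remark's equivalence, or directly from the fact that a diagram category valued in a Grothendieck topos is again a Grothendieck topos, being a left-exact reflective subcategory of $[\widehat{\Delta[1]},[C^{\mathrm{op}},\mathrm{Set}]]$ when $\mathbb{E}=\mathcal{S}\mathrm{hv}(C,J)$.

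For the formula $\pi_{0}(X)\simeq\mathrm{Hom}_{\mathbb{E}}(*,X[0])$ I would just trace the equivalence of the remark. There an object $X\in[\widehat{\Delta[1]},\mathbb{E}]$ is sent to the family $\langle X[1]\times_{X[0]}*\rangle$ of fibres of $X[r]$ over the global elements $p\colon *\to X[0]$, each canonically pointed by the section $X[s]$; this family is indexed by the set $\mathrm{Hom}_{\mathbb{E}}(*,X[0])$. Since $\pi_{0}=\Pi_{\infty}$ sends a family to its indexing $\infty$-groupoid, here the discrete set $\mathrm{Hom}_{\mathbb{E}}(*,X[0])$, the claimed isomorphism follows; naturality in $X$ is inherited from the naturality of the equivalence. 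Taking $\mathbb{E}=\mathrm{Set}$ recovers Corollary \ref{cot}.

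I expect the only genuinely delicate step to be the identification of the abstract $\Delta$ of Proposition \ref{adj} with the topos-theoretic constant-object functor, since the adjoint triple --- and hence the phrase ``locally connected'' --- hinges on it; once one observes that $\sigma(*)$ is terminal in $\mathrm{Fam}(\mathcal{C})$ this is routine, and the remaining verifications (local presentability, the index-set computation, naturality) are bookkeeping carried over from earlier in the paper.
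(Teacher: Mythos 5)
Your proposal is correct and follows essentially the same route the paper intends: transport along the equivalence $[\widehat{\Delta[1]},\mathbb{E}]\simeq\mathrm{Fam}(\mathbb{E}^{*/})$ from the preceding remark, obtain local connectedness from the adjunction of Proposition \ref{adj} by identifying $\Delta$ with the constant-object functor, and read off $\pi_{0}(X)$ as the indexing set $\mathrm{Hom}_{\mathbb{E}}(*,X[0])$ of the associated family of pointed fibres. Your identification of $\Delta(I)=\coprod_{i\in I}\sigma(*)$ with the constant object on $I$ is exactly the step the paper leaves implicit, and the rest is the same bookkeeping.
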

Corollary 6.6 indicates that the family construction may be of use in computations involving connected components of objects in locally connected topoi.\\
The next proposition gives a relationship between the coproduct and colimit completions of loci.
\begin{prop}\label{geo}
Let $\mathcal{C}$ be a locus. Then the Yoneda extension $(-)\otimes_{\mathcal{C}}\sigma:[\mathcal{C}^{op},\mathrm{Set}]\rightarrow\mathrm{Fam}(\mathcal{C})$ of the singleton embedding $\sigma:\mathcal{C}\hookrightarrow\mathrm{Fam}(\mathcal{C})$ is the inverse image component of a canonical geometric morphism $\mathrm{Fam}(\mathcal{C})\rightarrow[\mathcal{C}^{op},\mathrm{Set}]$. 
\end{prop}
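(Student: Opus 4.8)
The plan is to construct the adjunction by hand and then reduce the only non-formal ingredient, left-exactness of the inverse image, to the classical theory of flat functors. Recall first that since $\mathcal{C}$ is a locus it is an ordinary locally presentable category, so $\mathrm{Fam}(\mathcal{C})$ is its coproduct completion and is a Grothendieck topos; in particular it is cocomplete and locally small. Write $y:\mathcal{C}\hookrightarrow[\mathcal{C}^{op},\mathrm{Set}]$ for the Yoneda embedding, so that $(-)\otimes_{\mathcal{C}}\sigma$ is the left Kan extension $\mathrm{Lan}_{y}\sigma$, computed by the coend $P\mapsto\int^{c\in\mathcal{C}}P(c)\cdot\sigma(c)$. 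Because $\mathrm{Fam}(\mathcal{C})$ is cocomplete and locally small, this extension exists and, by the universal (``nerve--realization'') property of presheaf categories, is left adjoint to the functor $N_{\sigma}:\mathrm{Fam}(\mathcal{C})\rightarrow[\mathcal{C}^{op},\mathrm{Set}]$ given by $N_{\sigma}(D)=\mathrm{Hom}_{\mathrm{Fam}(\mathcal{C})}(\sigma(-),D)$. Thus $((-)\otimes_{\mathcal{C}}\sigma)\dashv N_{\sigma}$ is our candidate geometric morphism. As a bonus one can check, using that each $\sigma(c)$ is a connected object of $\mathrm{Fam}(\mathcal{C})$ (Lemma \ref{connect}) and that every object of $\mathrm{Fam}(\mathcal{C})$ is a coproduct of objects in the image of $\sigma$, that $N_{\sigma}\langle X_{i}\rangle_{i\in I}\simeq\coprod_{i\in I}y(X_{i})$ and that the counit is invertible, so the resulting morphism is in fact a geometric embedding; this is not needed here.

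It then remains to verify that $(-)\otimes_{\mathcal{C}}\sigma$ preserves finite limits. Preservation of the terminal object is immediate: $\mathcal{C}$ is complete, so $y$ of its terminal object $*$ is the terminal presheaf, and $y(*)\otimes_{\mathcal{C}}\sigma\simeq\sigma(*)$ is terminal in $\mathrm{Fam}(\mathcal{C})$ because $\sigma$ is left-exact (as recorded in Section 3). The substantive point is preservation of pullbacks, and for this I would invoke the flat functor theorem (Diaconescu): the Yoneda extension of a functor from a finitely complete category is left-exact precisely when that functor is flat, and a functor from a finitely complete category is flat precisely when it preserves finite limits. Since $\mathcal{C}$ has all finite limits and $\sigma$ is left-exact, $\sigma$ is flat, hence $(-)\otimes_{\mathcal{C}}\sigma$ is left-exact. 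Combined with the adjunction above, this is exactly the statement that $((-)\otimes_{\mathcal{C}}\sigma)\dashv N_{\sigma}$ is a geometric morphism $\mathrm{Fam}(\mathcal{C})\rightarrow[\mathcal{C}^{op},\mathrm{Set}]$ whose inverse image is $(-)\otimes_{\mathcal{C}}\sigma$, and it is canonical because it is induced by the canonical embedding $\sigma$.

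The main obstacle is precisely the preservation of pullbacks, i.e. the flatness of $\sigma$; everything else is formal bookkeeping. One could instead argue directly — in the Grothendieck topos $\mathrm{Fam}(\mathcal{C})$ finite limits commute with filtered colimits, and this can be combined with the cofiltered-category-of-cones description of flat functors — but that is just what the flat functor theorem packages, so citing it is cleaner. Finally, I would flag the set-theoretic caveat that when $\mathcal{C}$ is a genuinely large locus one must read $[\mathcal{C}^{op},\mathrm{Set}]$ as the topos of small presheaves on $\mathcal{C}$ (equivalently, replace $\mathcal{C}$ by a small dense subcategory); the argument is unchanged under this reading.
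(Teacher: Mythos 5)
Your proposal is correct and follows essentially the same route as the paper: you construct the direct image as the nerve $N_{\sigma}(\zeta)=\mathrm{Hom}_{\mathrm{Fam}(\mathcal{C})}(\sigma(-),\zeta)$ and reduce left-exactness of the Yoneda extension to flatness of $\sigma$, which (since $\mathcal{C}$ is finitely complete by local presentability and $\mathrm{Fam}(\mathcal{C})$ is a topos) amounts to $\sigma$ preserving finite limits. The paper phrases the flatness step via internal flatness before reducing to representable flatness, but this is the same Diaconescu-type argument you invoke.
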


\begin{proof}
By classical results, we can explicitly construct the right adjoint component $N$ by $N(\zeta)=\mathrm{Hom}_{\mathrm{Fam}(\mathcal{C})}(\sigma(-),\zeta):\mathcal{C}^{op}\rightarrow\mathrm{Set}$ for $\zeta\in\mathrm{Fam}(\mathcal{C})$. By [6, VII.9.1], In order to show that $(-)\otimes_{\mathcal{C}}\sigma$ is left-exact, it suffices to show that $\sigma$ is flat in the internal logic of $\mathcal{C}$. But by our assumptions $\mathcal{C}$ is locally presentable (and hence finitely complete) and $\mathrm{Fam}(\mathcal{C})$ is a topos, so $\sigma$ is internally flat if it is representably flat. Again by the finite completeness assumption on $\mathcal{C}$, $\sigma$ is representably flat precisely if it preserves finite limits, which holds by [2, Corollary 6.2.7]. Thus $(-)\otimes_{\mathcal{C}}\sigma$ is left-exact, so we obtain the desired geometric morphism $(-)\otimes_{\mathcal{C}}\sigma\dashv N:\mathrm{Fam}(\mathcal{C})\rightarrow[\mathcal{C}^{op},\mathrm{Set}]$. 
\end{proof}
\begin{defn}[Joyal]
An \textit{$\infty$-locus} is a locally presentable $\infty$-category $\mathcal{C}$ such that $\mathrm{Fam}(\mathcal{C})$ is an $\infty$-topos.
\end{defn}

\begin{exmp}
Important examples include the $\infty$-category $\mathrm{Grpd}_{\infty}^{*/}$ of pointed homotopy types and the $\infty$-category $\mathbf{Sp}$ of spectra. In analogy to the 1-categorical case, $\mathrm{Fam}(\mathrm{Grpd}_{\infty}^{*/})$ is an $\infty$-topos due to the equivalence $[\widehat{\Delta[1]},\mathrm{Grpd}_{\infty}]\simeq\mathrm{Fam}(\mathrm{Grpd}_{\infty}^{*/})$.
\end{exmp}
The following was conjectured by Joyal in \cite{joy} and proven by Hoyois in \cite{gal}. The proof in \textit{loc cit.} relies on the ``stable" Giraud Theorem.
\begin{thm}\label{stable}
Any locally presentable stable $\infty$-category is an $\infty$-locus.
\end{thm}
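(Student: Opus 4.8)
The plan is to verify the $\infty$-categorical Giraud axioms for $\mathrm{Fam}(\mathcal{C})$ — presentability, disjointness of coproducts, universality of colimits, and effectivity of groupoid objects [6, Theorem 6.1.0.6] — of which the first two are essentially formal. Presentability: $\Pi_{\infty}\colon\mathrm{Fam}(\mathcal{C})=\int[-,\mathcal{C}]\to\mathrm{Grpd}_{\infty}$ is the Cartesian fibration classified by $[-,\mathcal{C}]\colon\mathrm{Grpd}_{\infty}^{op}\to\mathrm{Cat}_{\infty}$, and because $\mathcal{C}$ is presentable — hence complete and cocomplete — every fiber $[X,\mathcal{C}]$ is presentable and every restriction functor $f^{*}$ admits both a left and a right adjoint (the relevant Kan extensions), so $[-,\mathcal{C}]$ takes values in $\mathrm{Pr}^{\mathrm L}$ and the total $\infty$-category over the presentable base $\mathrm{Grpd}_{\infty}$ is itself presentable; alternatively one reads this off the identification of $\mathrm{Fam}(\mathcal{C})$ with a full subcategory of the presheaf $\infty$-topos $[\mathcal{C}^{op},\mathrm{Grpd}_{\infty}]$. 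Disjointness of coproducts follows from extensivity of $\mathrm{Fam}(\mathcal{C})$: the coproduct $\coprod_{i}(X_{i},F_{i})$ has shape $\coprod_{i}X_{i}$, and applying $\Pi_{\infty}$ to the pairwise fiber products identifies them with $X_{i}\times_{\coprod_{k}X_{k}}X_{j}\simeq\emptyset$ for $i\neq j$, so those fiber products are initial.

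The content is in the remaining two axioms, and this is where stability enters. Using the explicit construction of colimits in $\mathrm{Fam}(\mathcal{C})$ from \S 3 — the shape of $\varinjlim_{K}D$ is $\varinjlim_{K}(\Pi_{\infty}\circ D)$ and its arrow is computed pointwise by a left Kan extension — and the fact that $\Pi_{\infty}$ preserves all colimits as well as all limits (Lemma~\ref{lims} together with completeness of $\mathcal{C}$), pulling a colimit cocone back along a map $(Z',G')\to\varinjlim_{K}D$ splits into a shape part and an arrow part. The shape part is handled by universality of colimits in the $\infty$-topos $\mathrm{Grpd}_{\infty}$, and that same input forces the indexing $\infty$-categories appearing in the arrow part (fiberwise over a point of $Z'$) to have weakly contractible nerve. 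The arrow part then reduces to the statement that in $\mathcal{C}$ a colimit indexed by an $\infty$-category with weakly contractible nerve is stable under base change. This holds in every presentable stable $\infty$-category: base-change functors $g^{*}$ between slices preserve pullback squares, which in the stable setting coincide with pushout squares, and filtered colimits in $\mathcal{C}$ are left exact because $\mathcal{C}$ is an exact accessible localization of $\mathrm{Ind}(\mathcal{B})$ for a small stable $\infty$-category $\mathcal{B}$ (the localization being exact since in a stable category a left adjoint automatically preserves fibers); together these let one build up preservation of every such colimit by $g^{*}$. Effectivity of groupoid objects is handled by the same mechanism: a groupoid object of $\mathrm{Fam}(\mathcal{C})$ has effective shape in $\mathrm{Grpd}_{\infty}$, and the arrow-level comparison of $U_{\bullet}$ with the \v{C}ech nerve of $U_{0}\to|U_{\bullet}|$ is again governed by a weakly contractible exactness statement in $\mathcal{C}$ that stability supplies.

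I expect the fiberwise stable-categorical lemma of the previous paragraph to be the main obstacle — isolating exactly the class of colimits that occur (those over $\infty$-categories with weakly contractible nerve) and checking their base-change stability in an arbitrary presentable stable $\infty$-category. This is precisely what the ``stable Giraud theorem'' packages, so the cleanest route, and the one taken in \cite{gal}, is to deduce the exactness of $\mathrm{Fam}(\mathcal{C})$ from that theorem and then check that its hypotheses transport through the $\mathrm{Fam}(-)$ construction; the delicate point there is the compatibility of the base-change functors $f^{*}$ on the fibers $[X,\mathcal{C}]$ with colimits, which is exactly the stable input. Granting this, all four Giraud axioms hold, so $\mathrm{Fam}(\mathcal{C})$ is an $\infty$-topos and $\mathcal{C}$ is an $\infty$-locus.
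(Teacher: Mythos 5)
The first thing to say is that the paper contains no proof of Theorem \ref{stable} to compare against: it is recorded as a result of Hoyois \cite{gal} (conjectured by Joyal), with the single remark that the proof there relies on the stable Giraud theorem. Measured against that cited argument, your proposal is a sensible road map but not a proof. The formal parts are fine: presentability of $\mathrm{Fam}(\mathcal{C})$ as the Grothendieck construction of a $\mathrm{Pr}^{\mathrm{L}}$-valued functor on $\mathrm{Grpd}_{\infty}$, and disjointness of coproducts from extensivity. The genuine gap is in the two axioms you yourself identify as the content. For universality of colimits you assert that, after splitting off the shape, the arrow-level statement ``reduces to'' universality in $\mathcal{C}$ of colimits indexed by weakly contractible $\infty$-categories. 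That reduction is precisely the hard step --- one must identify the base change of the left Kan extension computing the arrow of $\varinjlim D$ with a left Kan extension over the comma/fiber categories of the shape-level pullback and verify that these are weakly contractible --- and it is nowhere carried out. The target lemma itself is true in a presentable stable $\infty$-category (write $Z\times_{Y}X\simeq\mathrm{fib}(Z\oplus X\to Y)$; the functor $Z\mapsto Z\oplus X$ commutes with weakly contractible colimits and $\mathrm{fib}$ is a shifted cofiber, hence commutes with all colimits), but your justification via ``filtered colimits are left exact'' plus ``pullbacks are pushouts'' does not assemble into a proof of preservation of an arbitrary weakly contractible colimit by $g^{*}$. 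Effectivity of groupoid objects is only gestured at, and it is not a formal consequence of the same computation.

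Your closing appeal to \cite{gal} also misdescribes how the stable Giraud theorem enters there. It is not used to ``deduce the exactness of $\mathrm{Fam}(\mathcal{C})$'' directly; it is used to present $\mathcal{C}$ as an accessible exact localization of a diagram category $\mathrm{Fun}(\mathcal{B},\mathbf{Sp})$ (exactness of the localization being automatic in the stable setting, as you note), after which the theorem follows from three closure statements: $\mathrm{Grpd}_{\infty}^{*/}$ is an $\infty$-locus (the $\infty$-categorical version of Observation \ref{j}), loci are closed under the limits needed to pass to $\mathbf{Sp}$ and under $\mathrm{Fun}(\mathcal{B},-)$, and loci are closed under accessible left exact localizations. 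A self-contained write-up should either carry out the Kan-extension/base-change computation sketched above or reproduce this reduction; as it stands the proposal names the right ingredients but leaves the decisive steps as assertions.
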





\section{Future Work}
In this paper, we have shown that the $\infty$-category $\mathrm{Fam}(\mathcal{C})$ has a natural homotopy theory which generalizes the homotopy theory of topological spaces. The following are some directions/questions for future work on this topic:\\

\begin{itemize}
    \item Generally, if $\mathcal{C}$ is an $(\infty,n)$-category for some $n\geq 1$, then the objects of $\mathrm{Fam}(\mathcal{C})$ are pairs $(X,F)$, where $X$ is an $(\infty,n-1)$ category and $F:X\rightarrow\mathcal{C}$ is a functor. This implies that an analog of $\Pi_{\infty}$ for parametrized families in arbitrary $(\infty,n)$-categories would output a directed space instead of a homotopy type (the ``fundamental $(\infty,n-1)$-category" instead of fundamental $\infty$-groupoid). There should be analogs of the contents of this paper in directed homotopy theory. However, there is a general lack of literature on the notions of $(\infty,n)$-topoi, etc., so this may be more difficult.\\
    
    \item  Can the analogy between the fundamental $\infty$-groupoid of a parametrized family and (for instance) the fundamental $\infty$-groupoids of objects in a locally $\infty$-connected higher topos be made more precise? More specifically, if $\mathrm{Fam}(\mathcal{C})$ is an $\infty$-topos, then does $\Pi_{\infty}$ fit into an adjoint triple resembling an essential geometric morphism?\\
\end{itemize}

\subsection{Acknowledgements}
We'd like to thank Nicholas Scoville for general discussions about this project and Marc Hoyois for clarifying our understanding about parametrized families in $\infty$-categories.

\end{document}